\newcommand{\cC}{\mathcal{C}}
\newcommand{\cB}{\mathcal{B}}
\newcommand{\cU}{\mathcal{U}}
\newcommand{\cV}{\mathcal{V}}
\newcommand{\cN}{\mathcal{N}}
\newcommand{\cE}{\mathcal{E}}
\newcommand{\cK}{\mathcal{K}}
\newcommand{\cA}{\mathcal{A}}
\newcommand{\cP}{\mathcal{P}}
\newcommand{\cJ}{\mathcal{J}}
\newcommand{\cQ}{\mathcal{Q}}
\newcommand{\cX}{\mathcal{X}}
\newcommand{\cY}{\mathcal{Y}}
\theoremstyle{theorem}
\newtheorem{Prop}{Proposition}[section]
\newtheorem{Lem}[Prop]{Lemma}
\newtheorem{Thm}[Prop]{Theorem}
\newtheorem*{Thm*}{Theorem}
\theoremstyle{definition}
\newtheorem{Def}[Prop]{Definition}
\newtheorem{Rem}[Prop]{Remark}
\newtheorem{Ex}[Prop]{Example}
\newcommand{\N}{{\mathbb{N}}}
\newcommand{\R}{{\mathbb{R}}}
\newcommand{\C}{{\mathbb{C}}}
\DeclareMathOperator{\dom}{dom}
\DeclareMathOperator{\rk}{rk}
\DeclareMathOperator{\im}{im}
\newcommand{\setdef}[2]{\left\{\ #1\ \left|\ \vphantom{#1} #2\ \right.\right\}}
\newcommand{\ddt}{\tfrac{\text{\normalfont d}}{\text{\normalfont d}t}}
\newlength{\innersep}
\newlength{\maxlength}
\newlength{\dummylength}
\newcommand{\JordanBlock}[3]{
\setlength{\arraycolsep}{0pt}
\renewcommand{\arraystretch}{0}
\settowidth{\maxlength}{$#1$}
\settoheight{\dummylength}{$#1$}
\ifdim\dummylength>\maxlength
  \setlength{\maxlength}{\dummylength}
\fi
\settowidth{\dummylength}{$#2$}
\ifdim\dummylength>\maxlength
  \setlength{\maxlength}{\dummylength}
\fi
\settoheight{\dummylength}{$#2$}
\ifdim\dummylength>\maxlength
  \setlength{\maxlength}{\dummylength}
\fi
\setlength{\innersep}{0.1\maxlength}
\addtolength{\maxlength}{\innersep}
\addtolength{\maxlength}{\innersep}
\newcommand{\invisiblebox}{\phantom{\rule{\maxlength}{\maxlength}}}
\begin{array}{ccc}
  {\tikz[remember picture] \node[outer sep=0,inner sep=\innersep] (a11) {$#1$};} & \invisiblebox & {\tikz[remember picture] \node[outer sep=0,inner sep=\innersep] (a13) {$#1$};}\\
   \invisiblebox &\phantom{\rule{#3}{#3}} & \invisiblebox \\
   {\tikz[remember picture] \node[outer sep=0,inner sep=\innersep] (a31) {$#2$};} & \invisiblebox & {\tikz[remember picture] \node[outer sep=0,inner sep=\innersep] (a33) {$#1$};}
\end{array}
\tikz[remember picture, overlay] \draw (a11) edge[very thick] (a33);
}
\newcommand{\UnityBlock}[3]{
\setlength{\arraycolsep}{0pt}
\renewcommand{\arraystretch}{0}
\settowidth{\maxlength}{$#1$}
\settoheight{\dummylength}{$#1$}
\ifdim\dummylength>\maxlength
  \setlength{\maxlength}{\dummylength}
\fi
\settowidth{\dummylength}{$#2$}
\ifdim\dummylength>\maxlength
  \setlength{\maxlength}{\dummylength}
\fi
\settoheight{\dummylength}{$#2$}
\ifdim\dummylength>\maxlength
  \setlength{\maxlength}{\dummylength}
\fi
\setlength{\innersep}{0.1\maxlength}
\addtolength{\maxlength}{\innersep}
\addtolength{\maxlength}{\innersep}
\newcommand{\invisiblebox}{\phantom{\rule{\maxlength}{\maxlength}}}
\begin{array}{ccc}
  {\tikz[remember picture] \node[outer sep=0,inner sep=\innersep] (a11) {$#1$};} & \invisiblebox & \invisiblebox\\
   \invisiblebox &\phantom{\rule{#3}{#3}} & \invisiblebox \\
   \invisiblebox & \invisiblebox & {\tikz[remember picture] \node[outer sep=0,inner sep=\innersep] (a33) {$#1$};}
\end{array}
\tikz[remember picture, overlay] \draw (a11) edge[very thick] (a33);
}
\newcommand{\ReverseUnityBlock}[3]{
\setlength{\arraycolsep}{0pt}
\renewcommand{\arraystretch}{0}
\settowidth{\maxlength}{$#1$}
\settoheight{\dummylength}{$#1$}
\ifdim\dummylength>\maxlength
  \setlength{\maxlength}{\dummylength}
\fi
\settowidth{\dummylength}{$#2$}
\ifdim\dummylength>\maxlength
  \setlength{\maxlength}{\dummylength}
\fi
\settoheight{\dummylength}{$#2$}
\ifdim\dummylength>\maxlength
  \setlength{\maxlength}{\dummylength}
\fi
\setlength{\innersep}{0.1\maxlength}
\addtolength{\maxlength}{\innersep}
\addtolength{\maxlength}{\innersep}
\newcommand{\invisiblebox}{\phantom{\rule{\maxlength}{\maxlength}}}
\begin{array}{ccc}
   \invisiblebox & \invisiblebox & {\tikz[remember picture] \node[outer sep=0,inner sep=\innersep] (a13) {$#1$};}\\
   \invisiblebox &\phantom{\rule{#3}{#3}} & \invisiblebox \\
   {\tikz[remember picture] \node[outer sep=0,inner sep=\innersep] (a31) {$#1$};} &\invisiblebox & \invisiblebox
\end{array}
\tikz[remember picture, overlay] \draw (a13) edge[very thick] (a31);
}
\newcommand{\LowerNilBlock}[3]{
\setlength{\arraycolsep}{0pt}
\renewcommand{\arraystretch}{0}
\settowidth{\maxlength}{$#1$}
\settoheight{\dummylength}{$#1$}
\ifdim\dummylength>\maxlength
  \setlength{\maxlength}{\dummylength}
\fi
\settowidth{\dummylength}{$#2$}
\ifdim\dummylength>\maxlength
  \setlength{\maxlength}{\dummylength}
\fi
\settoheight{\dummylength}{$#2$}
\ifdim\dummylength>\maxlength
  \setlength{\maxlength}{\dummylength}
\fi
\setlength{\innersep}{0.1\maxlength}
\addtolength{\maxlength}{\innersep}
\addtolength{\maxlength}{\innersep}
\newcommand{\invisiblebox}{\phantom{\rule{\maxlength}{\maxlength}}}
\begin{array}{cccc}
  \tikz[remember picture] \node[outer sep=0,inner sep=\innersep] (a11) {$#1$}; &  & \invisiblebox & \invisiblebox\\
  {\tikz[remember picture] \node[outer sep=0,inner sep=\innersep] (a21) {$#2$};}&  & \invisiblebox & \invisiblebox\\
   & \phantom{\rule{#3}{#3}} &  & \\
   \invisiblebox & &  {\tikz[remember picture] \node[outer sep=0,inner sep=\innersep] (a43) {$#2$};} &
   {\tikz[remember picture] \node[outer sep=0,inner sep=\innersep] (a44) {$#1$};}
\end{array}
\tikz[remember picture, overlay] \draw (a11) edge[very thick] (a44);
\tikz[remember picture, overlay] \draw (a21) edge[very thick] (a43);
}
\newcommand{\RectBlock}[3]{
\setlength{\arraycolsep}{0pt}
\renewcommand{\arraystretch}{0}
\settowidth{\maxlength}{$#1$}
\settoheight{\dummylength}{$#1$}
\ifdim\dummylength>\maxlength
  \setlength{\maxlength}{\dummylength}
\fi
\settowidth{\dummylength}{$#2$}
\ifdim\dummylength>\maxlength
  \setlength{\maxlength}{\dummylength}
\fi
\settoheight{\dummylength}{$#2$}
\ifdim\dummylength>\maxlength
  \setlength{\maxlength}{\dummylength}
\fi
\setlength{\innersep}{0.1\maxlength}
\addtolength{\maxlength}{\innersep}
\addtolength{\maxlength}{\innersep}
\newcommand{\invisiblebox}{\phantom{\rule{\maxlength}{\maxlength}}}
\begin{array}{ccccc}
  \tikz[remember picture] \node[outer sep=0,inner sep=\innersep] (a11) {$#1$}; &{\tikz[remember picture] \node[outer sep=0,inner sep=\innersep] (a21) {$#2$};}& & \invisiblebox & \invisiblebox\\
  &&\phantom{\rule{#3}{#3}} &&\\
   \invisiblebox &\invisiblebox& &
   {\tikz[remember picture] \node[outer sep=0,inner sep=\innersep] (a44) {$#1$};} & {\tikz[remember picture] \node[outer sep=0,inner sep=\innersep] (a43) {$#2$};}
\end{array}
\tikz[remember picture, overlay] \draw (a11) edge[very thick] (a44);
\tikz[remember picture, overlay] \draw (a21) edge[very thick] (a43);
}
\newcommand\norm[1]{\left\lVert#1\right\rVert}
\newenvironment{smallpmatrix}
{\left(\begin{smallmatrix}}
{\end{smallmatrix}\right)}
\newenvironment{smallbmatrix}
{\left[\begin{smallmatrix}}
{\end{smallmatrix}\right]}
\begin{document}

\title{Observers for differential-algebraic systems with Lipschitz or monotone nonlinearities\thanks{This work was supported by the German Research Foundation (Deutsche Forschungsgemeinschaft) via the grant BE 6263/1-1.}}

\author{Thomas Berger \and Lukas Lanza}

\institute{Corresponding author: Thomas Berger  \at Tel.: +49 5251 60-3779
            \and
            Thomas Berger, Lukas Lanza \at
              Institut f\"ur Mathematik, Universit\"at Paderborn, Warburger Str.~100, 33098~Paderborn, Germany \\
              \email{\{thomas.berger, lukas.lanza\}@math.upb.de}
}

\maketitle

\begin{abstract}
We study state estimation for nonlinear differential-algebraic systems, where the nonlinearity satisfies a Lipschitz condition or a generalized monotonicity condition or a combination of these. The presented observer   design unifies earlier approaches and extends the standard Luenberger type observer design. The design parameters of the observer can be obtained from the solution of a linear matrix inequality restricted to a subspace determined by the Wong sequences. Some illustrative examples and a comparative discussion are given.

\keywords{Differential-algebraic system \and Nonlinear system \and Observer \and Wong sequence \and Linear matrix inequality}
\end{abstract}

\section{Introduction}\label{Sec:Intr}

The description of dynamical systems using  \textit{differential-algebraic equations} (DAEs), which are a combination of differential equations with algebraic constraints, arises in various relevant applications, where the dynamics are algebraically constrained, for instance by tracks, Kirchhoff laws, or conservation laws. To name but a few, DAEs appear naturally in mechanical multibody dynamics~\cite{EichFueh98}, electrical networks~\cite{Riaz08} and chemical engineering~\cite{KumaDaou99}, but also in non-natural scientific contexts such as economics~\cite{LuenArbe77} or demography~\cite{Camp82}. The aforementioned problems often cannot be modeled by ordinary differential equations~(ODEs) and hence it is of practical interest to investigate the properties of DAEs. Due to their power in applications, nowadays DAEs are an established field in applied mathematics and subject of various monographs and textbooks, see e.g.~\cite{BrenCamp89,KunkMehr06,LamoMarz13}.

In the present paper we study state estimation for a class of nonlinear differential-algebraic systems. Nonlinear DAE systems seem to have been first considered by Luenberger~\cite{Luen79a}; cf.\ also the textbooks~\cite{KunkMehr06,LamoMarz13} and the recent works~\cite{Berg16a,Berg17b}. Since it is often not possible to directly measure the state of a system, but only the external signals (input and output) and an internal model are available, it is of interest to construct an ``observing system'' which approximates the original system's state. Applications for observers are for instance error detection and fault diagnosis, disturbance (or unknown input) estimation and feedback control, see e.g.~\cite{CorrCris12,YeuKawa02}.

Several results on observer design for nonlinear DAEs are available in the literature. Lu and Ho~\cite{LuHo06} developed a Luenberger type observer for square systems with Lipschitz continuous nonlinearities, utilising solutions of a certain linear matrix inequality (LMI) to construct the observer. This is more general than the results obtained in~\cite{GaoHo06}, where the regularity of the linear part was assumed. Extensions of the work from~\cite{LuHo06} are discussed in~\cite{ZhanSwai16,ZulfReha16}, e.g.\ for the case of nonlinearities in the output equation. We stress that the approach in~\cite{BoutDaro02} and~\cite{HaTrin04}, where ODE systems with unknown inputs are considered, is similar to the aforementioned since these systems may be treated as DAEs as well. A different approach is taken in~\cite{AsluFris06}, where completely nonlinear DAEs which are semi-explicit and index-1 are investigated, and in~\cite{YangKong13}, where a nonlinear generalized PI observer design is used.

Recently, Gupta et al.~\cite{GuptToma18} presented a reduced-order observer design which is applicable to non-square DAEs with generalized monotone nonlinearities. Systems with nonlinearities which satisfy a more general monotonicity condition are considered in~\cite{YangZhan12}, but the results found there are applicable to square systems only.

A novel observer design using so called innovations has been developed in~\cite{PoldWill98,ValcWill99} and considered for linear DAEs in~\cite{BergReis17c} and for DAEs with Lipschitz continuous nonlinearities in~\cite{Berg19}. Roughly speaking, the innovations are ``[...] a measure for the correctness of the overall internal model at time~$t$''~\cite{BergReis17c}. This approach extends the classical Luenberger type observer design and allows for non-square systems.

It is our aim to present an observer design framework which unifies the above mentioned approaches. To this end, we use the approach from~\cite{BergReis17c} for linear DAEs (which can be non-square) and extend it to incorporate both nonlinearities which are Lipschitz continuous as in~\cite{Berg19,LuHo06} and nonlinearities which are generalized monotone as in~\cite{GuptToma18,YangZhan12}, or combinations thereof. 
We show that if a certain LMI restricted to a subspace determined by the Wong sequences is solvable, then there exists a state estimator (or observer) for the original system, where the gain matrices corresponding to the innovations in the observer are constructed out of the solution of the LMI. We will distinguish between an \textit{(asymptotic) observer} and a \textit{state estimator}, cf.\ Section~\ref{Definitions}. To this end, we speak of an \textit{observer candidate} before such a system is found to be an observer or a state estimator. We stress that such an observer candidate is a DAE system in general; for the investigation of the existence of ODE observers see e.g.~\cite{Berg19,BergReis19,GuptToma18}.

This paper is organised as follows: We briefly state the basic definitions and some preliminaries on matrix pencils in Section~\ref{Definitions}. The unified framework for the observer design is presented in Section~\ref{System,observer and error dynamics}. In Sections~\ref{Theorems and proofs} and~\ref{Sufficient conditions for asymptotic observers} we state and prove the main results of this paper. Subsequent to the proofs we  give some instructive examples for the theorems in Section~\ref{examples}. A discussion as well as a comparison to the relevant literature is provided in Section~\ref{Comparison with other results} and computational aspects are discussed in Section~\ref{Sec:CompAsp}.

\subsection{Nomenclature}\label{Ssec:Nomencl}

\begin{tabular}{l l}
$A \in \mathbb{R}^{n \times m}$ & the matrix $A$ is in the set of real $n \times m$ matrices; \\
$\rk A$, $\im A$, $\ker A$ & the rank, image and kernel of $A \in \mathbb{R}^{n \times m}$, resp.; \\
$\cC^k(X \to Y)$ & the set of $k-$times continuously differentiable functions \\
& $f\colon X \to Y$, $k \in \mathbb{N}_0$; \\
$\dom(f)$ & the domain of the function $f$; \\
$A >_V 0$ &$:\iff \forall\, x \in V \setminus{\{0\}}: \,  x^\top A x > 0$, $V \subseteq \mathbb{R}^n$ a subspace; \\
$\mathbb{R}[s]$ & rhe ring of polynomials with coefficients in $\mathbb{R}$.
\end{tabular}

\section{Preliminaries  \label{Definitions}}

We consider nonlinear DAE systems of the form
\begin{equation}
\label{general}
	\begin{aligned}
	 \ddt E x(t) &= f(x(t), u(t), y(t)) \\
	 y(t) &= h(x(t), u(t)),
	\end{aligned}
\end{equation}
with $E \in \mathbb{R}^{l \times n}$, $f \in \cC(\mathcal{X} \times \mathcal{U} \times \mathcal{Y} \to \mathbb{R}^l)$ and $h \in \cC(\mathcal{X} \times \mathcal{U} \to \mathbb{R}^p)$, where  $\mathcal{X} \subseteq \mathbb{R}^n$, $\mathcal{U} \subseteq \mathbb{R}^m$ and  $\mathcal{Y} \subseteq \mathbb{R}^p$  are open.  The functions $x:I\to\R^n$, $u:I\to\R^m$ and $y:I\to\R^p$ are called the \emph{state}, \emph{input} and \emph{output}  of~\eqref{general}, resp. Since solutions not necessarily exist globally we consider local solutions of~\eqref{general}, which leads to the following solution concept, cf.~\cite{Berg19}.

\begin{Def}
\label{def-solution}
Let $I \subseteq \mathbb{R}$ be an open interval. A trajectory $(x,u,y)\in\cC(I\to\mathcal{X}\times\mathcal{U}\times \mathcal{Y})$ is called \textit{solution} \index{Solution} of~\eqref{general}, if $x \in \cC^1(I \to \mathcal{X})$ and~\eqref{general} holds for all $t \in I$. The set
\[
\mathfrak{B}_{\eqref{general}} := \setdef{(x,u,y) \in \cC(I\!\to\!\mathcal{X}\!\times\!\cU\!\times\!\cY) }{ I \subseteq \mathbb{R} \text{ open intvl.,}\ (x,u,y)\, \text{is a solution of~\eqref{general}} }
\]
of all possible solution trajectories is called the \textit{behavior} \index{Behavior} of system~\eqref{general}.
\end{Def}

We stress that the interval of definition~$I$ of a solution of~\eqref{general} does not need to be maximal and, moreover, it depends on the choice of the input~$u$. Next we introduce the concepts of an acceptor, an (asymptotic) observer and a state estimator. These definitions follow in essence the definitions given in~\cite{Berg19}.

\begin{Def}
\label{def-acceptor}
Consider a system \eqref{general}. The system
\begin{equation}
\label{acceptor}
	\begin{aligned}
\ddt E_o x_o(t) &= f_o( x_o(t), u(t), y(t)),\\
z(t) &= h_o(x_o(t), u(t), y(t)),
	\end{aligned}
\end{equation}
where $E_o \in \mathbb{R}^{l_o \times n_o}$, $f_o \in \cC(\mathcal{X}_o \times \cU \times \cY \to \mathbb{R}^{l_o})$,
$h_o \in \cC(\mathcal{X}_o \times \cU \times \cY \to \mathbb{R}^{p_o}$), $\mathcal{X}_o \subseteq \mathbb{R}^{n_o}$ open, is called \textit{acceptor for \eqref{general}} \index{Acceptor}, if for all $(x,u,y) \in \mathfrak{B}_{\eqref{general}}$ with $I=\dom(x)$, there exist $x_o \in \cC^1(I \to \mathcal{X}_o)$, $z\in \cC(I \to \mathbb{R}^{p_o})$ such that
\begin{equation*}
\left(x_o, \begin{smallpmatrix} u\\ y\end{smallpmatrix}, z \right) \in \mathfrak{B}_{\eqref{acceptor}}.
\end{equation*}
\end{Def}

The definition of an acceptor shows that the original system influences, or may influence, the acceptor but not vice-versa, i.e., there is a directed signal flow from~\eqref{general} to~\eqref{acceptor}, see Fig.~\ref{Fig:acceptor}.

\begin{figure}[h!tb]
\begin{center}
\begin{tikzpicture}[very thick,node distance = 10ex, box/.style={fill=white,rectangle, draw=black}, blackdot/.style={inner sep = 0, minimum size=3pt,shape=circle,fill,draw=black},plus/.style={fill=white,circle,inner
sep = 0,thick,draw},metabox/.style={inner sep = 0ex,rectangle,draw,dotted,fill=gray!20!white}]
  \tikzset{>=latex}

    \node (EA)     [box,minimum size=6ex]  {$\begin{aligned} E\, \dot x(t) &= f\big(x(t),u(t),y(t)\big)\\
      y(t) &= h\big(x(t),u(t)\big)\end{aligned}$};
    \node (Acc)     [box,minimum size=6ex,below of = EA, yshift = -10ex]  {$\begin{aligned} E_o\, \dot x_o(t) &= f_o\big(x_o(t),u(t),y(t)\big)\\
      z(t) &= h_o\big(x_o(t),u(t),y(t)\big)\end{aligned}$};
    \node (rightofAcc)   [right of = Acc, xshift = 20ex] {};
    \node (1leftofAcc)   [left of = Acc, xshift = -3.3ex, yshift = 2ex] {};
    \node (2leftofAcc)   [left of = Acc, xshift = -3.3ex, yshift = -2ex] {};

    \node (leftofEA)   [blackdot,left of = EA, xshift=-20ex] {};
    \node (knick1)   [below of = leftofEA,,minimum size=0ex, xshift=10ex] {};
    \node (lleftofEA)   [left of = leftofEA] {};
    \node (rightofEA)   [blackdot,right of = EA, xshift=10ex] {};
    \node (rrightofEA)   [right of = rightofEA] {};

    \draw[-] (lleftofEA) -- (EA) node[midway,above] {$u(t)$};
    \draw[-] (EA) -- (rrightofEA) node[midway,above] {$y(t)$};
    \draw[->] (Acc) -- (rightofAcc) node[midway,above] {$z(t)$};
    \draw[-] (rightofEA) |- (knick1.west) node[midway,above] {};
    \draw[->] (leftofEA) |- (2leftofAcc) node[midway,above] {};
    \draw[->] (knick1.west) |- (1leftofAcc) node[midway,above] {};

  \end{tikzpicture}
\end{center}
  \caption{Interconnection with an acceptor} \label{Fig:acceptor}
\end{figure}
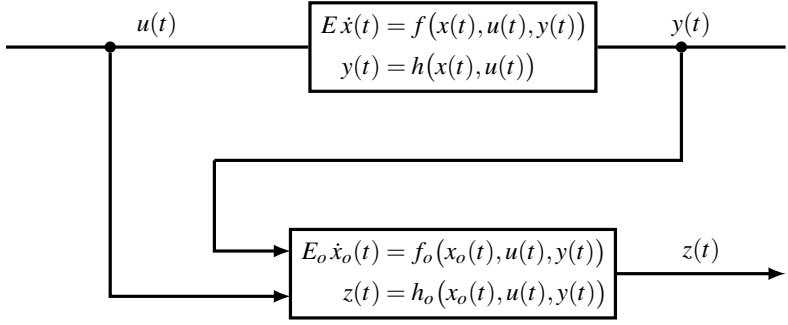

\begin{Def}
\label{def-observer}
Consider a system \eqref{general}. Then a system~\eqref{acceptor} with $p_o=n$ is called
\begin{enumerate}[a)]
\item an {\em observer for~\eqref{general}}\index{Observer}, if it is an acceptor for~\eqref{general}, and
\begin{equation}\label{eq-observer}
\begin{aligned}
&\forall\, I\subseteq\R\ \text{open interval}\ \forall\, t_0\in I\ \forall\, (x,u,y,x_o,z)\in\cC(I\to\mathcal{X} \times \mathcal{U} \times \mathcal{Y} \times \mathcal{X}_o  \times \mathbb{R}^{n}):\\
& \Big((x,u,y)\in\mathfrak{B}_{\eqref{general}}\;\wedge\; (x_o,\left(\begin{smallmatrix}u\\y\end{smallmatrix}\right),z)\in\mathfrak{B}_{\eqref{acceptor}}\ \wedge\;
Ez(t_0)=Ex(t_0)\Big)\ \ \Longrightarrow\ \ z = x;
\end{aligned}
\end{equation}
\item a {\em state estimator for~\eqref{general}}\index{State estimator}, if it is an acceptor for~\eqref{general}, and
\begin{equation}\label{estimator}
\begin{aligned}
&\forall\, t_0\in \R\ \forall\, (x,u,y,x_o,z)\in\cC([t_0,\infty)\to\mathcal{X} \times \mathcal{U} \times \mathcal{Y} \times \mathcal{X}_o  \times \mathbb{R}^{n}):\\
&\Big( (x,u,y)\in\mathfrak{B}_{\eqref{general}}\ \wedge\  (x_o,\left(\begin{smallmatrix}u\\y\end{smallmatrix}\right),z)\in\mathfrak{B}_{\eqref{acceptor}}\Big)\ \ \Longrightarrow\ \ \lim_{t\to\infty} z(t)-x(t) = 0;
\end{aligned}
\end{equation}
\item an {\em asymptotic observer for~\eqref{general}}, if it is an observer and a state estimator for~\eqref{general}.
\end{enumerate}
\end{Def}

The property of being a state estimator is much weaker than being an asymptotic observer. Since there is no requirement such as~\eqref{eq-observer} it might even happen that the state estimator's state matches the original system's state for some time, but eventually evolves in a different direction.

Concluding this section we recall some important concepts for matrix pencils. First, a matrix pencil $sE-A \in \mathbb{R}[s]^{l \times n}$ is called \textit{regular}, if $l=n$ and $\det(sE-A) \neq 0 \in \mathbb{R}[s]$. An important geometric tool are the \textit{Wong sequences}, named after Wong~\cite{Wong74}, who was the first to use both sequences for the analysis of matrix pencils. The Wong sequences are investigated and utilized for the decomposition of matrix pencils in~\cite{BergIlch12a,BergTren12,BergTren13}.

\begin{Def}
\label{Wong-sequences}
Consider a matrix pencil $sE-A \in \mathbb{R}[s]^{l \times n}$. The \textit{Wong sequences}\index{Wong sequence} are sequences of subspaces, defined by\\[2mm]
\begin{tabular}{l l l}
$\mathcal{V}_{[E,A]}^0 := \mathbb{R}^n$, & $\mathcal{V}_{[E,A]}^{i+1} := A^{-1}(E \mathcal{V}_{[E,A]}^i) \subseteq \mathbb{R}^n$, & $\mathcal{V}_{[E,A]}^* := \bigcap\limits_{i \in  \mathbb{N}_0}\mathcal{V}_{[E,A]}^i$,\\
$\mathcal{W}_{[E,A]}^0 := \{0\}$, & $\mathcal{W}_{[E,A]}^{i+1} := E^{-1}(A\mathcal{W}_{[E,A]}^i) \subseteq \mathbb{R}^n$, &
 $\mathcal{W}_{[E,A]}^* := \bigcup\limits_{i \in  \mathbb{N}_0}\mathcal{W}_{[E,A]}^i$,
\end{tabular}

where $A^{-1}(S) = \{ x \in \mathbb{R}^n \mid Ax \in S \}$ is the preimage of $S \subseteq \mathbb{R}^l$ under $A$. The subspaces~$\mathcal{V}_{[E,A]}^*$ and~$\mathcal{W}_{[E,A]}^*$ are called the \textit{Wong limits}\index{Wong limit}.
\end{Def}

As shown in~\cite{BergTren12} the Wong sequences terminate, are nested and satisfy
\begin{equation*}
	\begin{aligned}
&\exists\, k^* \in \mathbb{N}\,\, \forall j \in \mathbb{N} : \mathcal{V}_{[E,A]}^0 \supsetneq \mathcal{V}_{[E,A]}^1 \supsetneq \dots \supsetneq \mathcal{V}_{[E,A]}^{k^*} = \mathcal{V}_{[E,A]}^{k^*+j} = \mathcal{V}_{[E,A]}^{*} \supseteq \ker(A), \\
&\exists\, l^* \in \mathbb{N} \,\, \forall j \in \mathbb{N} : \mathcal{W}_{[E,A]}^0 \subseteq \ker(E) = \mathcal{W}_{[E,A]}^1 \subsetneq \dots \subsetneq \mathcal{W}_{[E,A]}^{l^*} = \mathcal{W}_{[E,A]}^{l^*+j} = \mathcal{W}_{[E,A]}^{*}.
	\end{aligned}
\end{equation*}

\begin{Rem}\label{Rem-B-trajectories-in-V}
Let $sE-A \in \mathbb{R}[s]^{l \times n}$ and consider the associated DAE $\ddt E x(t) = Ax(t)$. In view of Definition~\ref{def-solution} we may associate with the previous equation the behavior
\[
    \mathfrak{B}_{[E,A]} = \setdef{ x \in \cC^1(I\to \mathbb{R}^n) }{ E \dot{x} = Ax,\ I\subseteq \mathbb{R} \text{ open interval}}.
\]
We have that all trajectories in $\mathfrak{B}_{[E,A]}$ evolve in $\mathcal{V}_{[E,A]}^{*}$, that is
\begin{equation}
\label{B-trajectories-in-V}
\forall\, x \in \mathfrak{B}_{[E,A]} \ \ \forall\, t \in \dom(x): \ \ x(t) \in \mathcal{V}_{[E,A]}^{*}.
\end{equation}
This can be seen as follows: For $x \in \mathfrak{B}_{[E,A]}$ we have that $x(t) \in \mathbb{R}^n = \mathcal{V}_{[E,A]}^0$ for all $t \in \dom(x)$. Since the linear spaces $\mathcal{V}_{[E,A]}^i$ are closed they are invariant under differentiation and hence $\dot{x}(t) \in \mathcal{V}_{[E,A]}^0$. Due to the fact that $x \in \mathfrak{B}_{[E,A]}$ it follows for all $t \in \dom(x)$ that $x(t) \in A^{-1} (E\mathcal{V}_{[E,A]}^0) = \mathcal{V}_{[E,A]}^1$.
Now assume that $x(t) \in \mathcal{V}_{[E,A]}^i$ for some $i \in \mathbb{N}_0$ and all $t\in\dom(x)$. By the previous arguments we find that $x(t) \in A^{-1}(E\mathcal{V}_{[E,A]}^i) = \mathcal{V}_{[E,A]}^{i+1}$.
\end{Rem}

An important concept in the context of DAEs is the index of a matrix pencil, which is based on the (quasi-)Weierstra\ss\ form (QWF), cf.~\cite{BergIlch12a,Gant59d,KunkMehr06,LamoMarz13}.

\begin{Def}
Consider a regular matrix pencil $sE-A \in \mathbb{R}[s]^{n \times n}$ and let $S,T\in\R^{n\times n}$ be invertible such that
\[ S(sE-A)T =
s \left[ \begin{matrix}
I_r & 0 \\ 0 & N
\end{matrix} \right]
-
\left[ \begin{matrix}
J & 0 \\ 0 & I_{n-r}
\end{matrix} \right] \]
for some $J\in\R^{r\times r}$ and nilpotent $N\in\R^{(n-r)\times (n-r)}$. Then
\[
    \nu := \begin{cases}
      0, & \text{if $r=n$},\\
      \min\setdef{k\in\N}{N^k=0}, & \text{if $r<n$}
    \end{cases}
\]
is called the \textit{index}\index{Index} of $sE-A$.
\end{Def}

The index is independent of the choice of~$S,T$ and can be computed via the Wong sequences as shown in~\cite{BergIlch12a}.

\section{System, observer candidate and error dynamics} \label{System,observer and error dynamics}

In this section we present the observer design used in this paper, which invokes so called innovations and was developed in~\cite{PoldWill98,ValcWill99} for linear behavioral systems. It is an extension of the classical approach to observer design which goes back to Luenberger, see~\cite{Luen64,Luen71}.

We consider nonlinear DAE systems of the form
\begin{equation}
\label{DAE}
	\begin{aligned}
\ddt E x(t) &= Ax(t) + B_L f_L(x(t),u(t),y(t)) + B_M f_M(Jx(t),u(t),y(t)),\\
y(t) &= Cx(t) + h(u(t)),
	\end{aligned}
\end{equation}
where $E,A \in \mathbb{R}^{l \times n}$ with $0\leq r=\rk(E)\leq n$, $B_L \in \mathbb{R}^{l \times q_L}$, $B_M \in \mathbb{R}^{l \times q_M}$, $J \in \mathbb{R}^{q_M \times n}$ with $\rk J =~q_M$, $C \in \mathbb{R}^{p \times n}$ and $h \in \cC(\mathcal{U} \to \mathbb{R}^p)$ with $\mathcal{U} \subseteq \mathbb{R}^m$ open. Furthermore, for some open sets $\mathcal{X} \subseteq \mathbb{R}^n, \mathcal{Y} \subseteq \mathbb{R}^p$ and $\hat \cX := J \cX \subseteq\R^{q_M}$, the nonlinear function~$f_L:\mathcal{X} \times \mathcal{U} \times \mathcal{Y} \to \mathbb{R}^{q_{L}}$ satisfies a Lipschitz condition\index{Lipschitz condition} in the first variable
\begin{equation}
\label{Lipschitz}
\forall\, x,z \in \mathcal{X}\ \forall\, u \in \mathcal{U}\ \forall\, y \in \mathcal{Y}: \ \ \|f_L(z,u,y)-f_L(x,u,y)\| \leq \|F(z-x)\|
\end{equation}
with $F \in \mathbb{R}^{j \times n}$, $j \in \mathbb{N}$; and $f_M: \hat \cX \times \mathcal{U} \times \mathcal{Y} \to \mathbb{R}^{q_{M}}$ satisfies a generalized monotonicity\index{Generalized monotonicity} condition in the first variable
\begin{equation}
\label{Monotonicity}
\forall\, x,z \in \hat \cX\ \forall\, u \in \mathcal{U}\ \forall\, y \in \mathcal{Y}: \ \ (z-x)^\top \Theta \big( f_M(z,u,y)-f_M(x,u,y) \big) \geq \frac{1}{2}\mu \|z-x\|^2
\end{equation}
for some $\Theta \in \mathbb{R}^{q_M \times q_M}$ and $\mu \in \mathbb{R}$. We stress that $\mu<0$ is explicitly allowed and~$\Theta$ can be singular, i.e., in particular~$\Theta$ does not necessarily satisfy any definiteness conditions as in~\cite{YangZhan12}. We set $B := [B_L, B_M] \in \mathbb{R}^{l \times (q_L + q_M)}$ and
\[
    f : \mathcal{X} \times \mathcal{U} \times \mathcal{Y} \to \mathbb{R}^{q_{L}} \times \mathbb{R}^{q_M},\ (x,u,y) \mapsto \begin{pmatrix} f_L(x,u,y) \\ f_M(Jx,u,y) \end{pmatrix}.
\]

Let us  consider a system~\eqref{DAE} and assume that $n=l$. Then another system driven by the external variables~$u$ and~$y$ of~\eqref{DAE} of the form
\begin{equation}
\label{Lu-observer}
	\begin{aligned}
\ddt E z(t) &= Az(t) + Bf(z(t),u(t),y(t)) + L(y(t)-\hat{y}(t)) \\
&= Az(t) + Bf(z(t),u(t),y(t)) + L(Cx(t)-Cz(t)) \\
&= (A-LC)z(t) + Bf(z(t),u(t),y(t)) + L \underbrace{Cx(t)}_{= y(t) - h(u(t))} \\
\text{with} \ \	\hat{y}(t) &= C z(t) + h(u(t))
		\end{aligned}
\end{equation}
is a Luenberger type observer\index{Luenberger type observer}, where $L\in\R^{n\times p}$ is the observer gain. The dynamics for the error state $e(t) = z(t)-x(t)$ read
\begin{equation*}
\ddt E e(t) = (A-LC)e(t) + B\big(f(x(t),u(t),y(t))-f(z(t),u(t),y(t))\big).
\end{equation*}
The observer \eqref{Lu-observer} incorporates a copy of the original system, and in addition the outputs' difference $\hat{y}(t)-y(t)$, the influence of which is weighted with the observer gain~$L$.

In this paper we consider a generalization of the design~\eqref{Lu-observer} which incorporates an extra variable~$d$ that takes the role of the innovations\index{Innovations}. The innovations are used to describe ``the difference between what we actually observe and what we had expected to observe''~\cite{PoldWill98}, and hence they generalize the effect of the observer gain~$L$ in~\eqref{Lu-observer}. We consider the following observer candidate, which is an additive composition of an internal model of the system~\eqref{DAE} and a further term which involves the innovations:
\begin{equation}
\label{eq:state-estimator}
	\begin{aligned}	
\ddt Ez(t)&=Az(t)+Bf(z(t),u(t),y(t))+L_1 d(t) \\
0&=Cz(t)-y(t)+h(u(t))+L_2 d(t),
	\end{aligned}
\end{equation}
where $x_o(t) = \begin{smallpmatrix}z(t)\\ d(t)\end{smallpmatrix}$ is the observer state and $L_1 \in \mathbb{R}^{l \times k}$, $L_2 \in \mathbb{R}^{p \times k}$, $\mathcal{X}_o = \mathcal{X} \times \mathbb{R}^k$. From the second line in~\eqref{eq:state-estimator} we see that the innovations term balances the difference between the system's and the observer's output. In a sense, the smaller the variable~$d$, the better the approximate state~$z$ in~\eqref{eq:state-estimator} matches the state~$x$ of the original system~\eqref{DAE}.

We stress that $n \neq l$ is possible in general, and if~$L_2$ is invertible, then the observer candidate reduces to
\begin{equation}
\label{Luen-from-inno}
	\begin{aligned}
\ddt E {z}(t) &= Az(t) + Bf(z(t),u(t),y(t))+L_1 L_2^{-1}(y(t) - Cz(t) - h(u(t))) \\
&= (A - L_1 L_2^{-1} C)z(t) + Bf(z(t),u(t),y(t)) + L_1 L_2^{-1}(\underbrace{y(t) - h(u(t))}_{=Cx(t)}),
	\end{aligned}
\end{equation}
which is a Luenberger type observer of the form \eqref{Lu-observer} with gain $L=L_1 L_2^{-1}$. Hence the Luenberger type observer is a special case of the observer design~\eqref{eq:state-estimator}. Being square is a necessary condition for invertibility of~$L_2$, i.e., $k=p$.

For later use we consider the dynamics of the error state $e(t):= z(t) - x(t)$ between systems~\eqref{DAE} and~\eqref{eq:state-estimator},
\begin{equation}
\label{ed}
\begin{aligned}	
\ddt Ee(t) &=Ae(t)+B\phi(t) +L_1d(t) \\
0&=Ce(t)+L_2d(t),
\end{aligned}
\end{equation}
where
\[
    \phi (t) := f(z(t),u(t),y(t))-f(x(t),u(t),y(t)) = \begin{smallpmatrix} f_L(z(t),u(t),y(t)) - f_L(x(t),u(t),y(t)) \\ f_M(Jz(t),u(t),y(t)) - f_M(Jx(t),u(t),y(t)) \end{smallpmatrix},
\]
and rewrite \eqref{ed} as
\begin{equation}
\label{ED}
\ddt \mathcal{E}\begin{smallpmatrix} e(t)\\ d(t)\end{smallpmatrix} = \mathcal{A} \begin{smallpmatrix} e(t)\\ d(t)\end{smallpmatrix}  +\mathcal{B}\phi(t),
\end{equation}
where
\begin{align*}
   & \mathcal{E} = \begin{bmatrix} E & 0 \\ 0 & 0 \end{bmatrix} \in \mathbb{R}^{(l+p) \times (n+k)},\quad
\mathcal{A} = \begin{bmatrix} A & L_1 \\ C & L_2 \end{bmatrix} \in \mathbb{R}^{(l+p) \times (n+k)}\\
and\quad & \mathcal{B} =  \begin{bmatrix} B \\ 0 \end{bmatrix} \in \mathbb{R}^{(l+p) \times (q_L + q_M)}.
\end{align*}
The following lemma is a consequence of~\eqref{B-trajectories-in-V}.

\begin{Lem}\label{Lem-augmented-trajectories-in-V}
Consider a system \eqref{DAE} and the observer candidate \eqref{eq:state-estimator}. Then \eqref{eq:state-estimator} is an acceptor for~\eqref{DAE}. Furthermore, for all open intervals $I \subseteq \mathbb{R}$, all $(x,u,y) \in \mathfrak{B}_{\eqref{DAE}}$ and all $\left(\binom{z}{d},\binom{u}{y},z \right) \in \mathfrak{B}_{\eqref{eq:state-estimator}}$ with $\dom(x)=\dom\binom{z}{d} = I$ we have:
\begin{equation}
\label{augmented-trajectories-in-V}
\forall\, t \in I: \ \left(\begin{matrix} e(t) \\ d(t) \\ \phi(t) \end{matrix}\right) \in \mathcal{V}^*_{\left[[\mathcal{E}, 0], [\mathcal{A}, \mathcal{B}] \right]} .
\end{equation}
\end{Lem}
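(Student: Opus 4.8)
The plan is to establish the two assertions in turn. For the acceptor property I would fix an arbitrary $(x,u,y)\in\mathfrak{B}_{\eqref{DAE}}$ with $I=\dom(x)$ and simply set $z:=x$ and $d:=0$. Then $x_o=\binom{z}{d}=\binom{x}{0}\in\cC^1(I\to\cX\times\R^k)=\cC^1(I\to\cX_o)$, and the first line of \eqref{eq:state-estimator} reduces to $\ddt Ex=Ax+Bf(x,u,y)$, which holds because it is exactly the first line of \eqref{DAE} (by definition of $B$ and $f$ one has $Bf(x,u,y)=B_Lf_L(x,u,y)+B_Mf_M(Jx,u,y)$). Since $L_2d=0$, the second line of \eqref{eq:state-estimator} reads $0=Cx-y+h(u)$, which is precisely the rearranged output equation of \eqref{DAE}. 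Hence $\left(\binom{x}{0},\binom{u}{y},x\right)\in\mathfrak{B}_{\eqref{eq:state-estimator}}$, so \eqref{eq:state-estimator} is an acceptor for \eqref{DAE}.

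For the inclusion \eqref{augmented-trajectories-in-V} I would first record that the error dynamics \eqref{ED}, for an arbitrary solution pair, are nothing but the linear DAE
\[
\ddt [\cE,0]\, w(t) = [\cA,\cB]\, w(t), \qquad w:=\begin{pmatrix} e\\ d\\ \phi\end{pmatrix},
\]
associated with the pencil $s[\cE,0]-[\cA,\cB]$: indeed $[\cA,\cB]w=\cA\binom{e}{d}+\cB\phi$ equals $\binom{\ddt Ee}{0}=\ddt\binom{Ee}{0}=\ddt[\cE,0]w$ by \eqref{ed}. The point to keep in mind is that $w$ is in general only \emph{continuous}, because $\phi(t)=f(z(t),u(t),y(t))-f(x(t),u(t),y(t))$ inherits only the continuity of $f,u,y$; thus $w\notin\mathfrak{B}_{[[\cE,0],[\cA,\cB]]}$ in the sense of Remark~\ref{Rem-B-trajectories-in-V}, and \eqref{B-trajectories-in-V} cannot be quoted verbatim. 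What \emph{is} differentiable, however, is the combination $[\cE,0]w=\binom{Ee}{0}$, since $e=z-x\in\cC^1$.

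This regularity gap is the only genuine obstacle, and I would resolve it by replaying the inductive argument behind \eqref{B-trajectories-in-V} with $[\cE,0]w$ in place of the (non-differentiable) trajectory. Concretely, I would prove by induction on $i$ that $w(t)\in\cV^i_{[[\cE,0],[\cA,\cB]]}$ for all $t\in I$. The case $i=0$ is trivial. Assuming $w(t)\in\cV^i$ for all $t$, the map $t\mapsto[\cE,0]w(t)$ takes values in the subspace $[\cE,0]\cV^i$; as this subspace is finite-dimensional, hence closed, every difference quotient and therefore the limit $\ddt[\cE,0]w(t)$ again lie in $[\cE,0]\cV^i$. Using the DAE identity $\ddt[\cE,0]w=[\cA,\cB]w$ this gives $[\cA,\cB]w(t)\in[\cE,0]\cV^i$, i.e.\ $w(t)\in[\cA,\cB]^{-1}([\cE,0]\cV^i)=\cV^{i+1}$. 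Intersecting over $i$ yields $w(t)\in\cV^*_{[[\cE,0],[\cA,\cB]]}$ for all $t$, which is \eqref{augmented-trajectories-in-V}. The main work is thus conceptual—recognising that only $[\cE,0]w$, and not $w$ itself, needs to be differentiated—rather than computational.
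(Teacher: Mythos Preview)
Your proof is correct and follows the same approach as the paper: for the acceptor property you take $z=x$, $d=0$, and for \eqref{augmented-trajectories-in-V} you rewrite the error dynamics as $\ddt[\cE,0]w=[\cA,\cB]w$ and invoke the Wong-sequence inclusion of Remark~\ref{Rem-B-trajectories-in-V}. The paper simply says the conclusion is ``immediate from Remark~\ref{Rem-B-trajectories-in-V}'', whereas you correctly flag and resolve a regularity subtlety the paper glosses over---namely that $\phi$ is in general only continuous (since $f,u,y$ are merely continuous), so $w\notin\cC^1$ and Remark~\ref{Rem-B-trajectories-in-V} does not literally apply; your observation that only $[\cE,0]w=\binom{Ee}{0}\in\cC^1$ needs to be differentiated, and your rerun of the induction using this, is exactly the right fix.
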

\begin{proof}
Let $I \subseteq \mathbb{R}$ be an open interval and $(x,u,y) \in \mathfrak{B}_{\eqref{DAE}}$. For any $(x,u,y) \in \mathfrak{B}_{\eqref{DAE}}$ it holds $\left(\binom{x}{0},\binom{u}{y},x \right) \in \mathfrak{B}_{\eqref{eq:state-estimator}}$, hence \eqref{eq:state-estimator} is an acceptor for \eqref{DAE}.\\
Now let $(x,u,y) \in \mathfrak{B}_{\eqref{DAE}}$ and $\left(\binom{z}{d},\binom{u}{y},z \right) \in \mathfrak{B}_{\eqref{eq:state-estimator}}$, with $I=\dom(x)=\dom\binom{z}{d}$ and rewrite~\eqref{ED} as
\begin{equation*}
\label{ED-augmented}
\ddt [\mathcal{E}, 0]\left(\begin{matrix} e(t) \\ d(t) \\ \phi(t) \end{matrix}\right) = [\mathcal{A}, \mathcal{B}] \left(\begin{matrix} e(t) \\ d(t) \\ \phi(t) \end{matrix}\right).
\end{equation*}
Then~\eqref{augmented-trajectories-in-V} is immediate from Remark~\ref{Rem-B-trajectories-in-V}.
\end{proof}

In the following lemma we show that for a state estimator to exist, it is necessary that the system~\eqref{DAE} does not contain free state variables, i.e., solutions (if they exist) are unique.

\begin{Lem}\label{Lem:rank-E'-A'} Consider a system \eqref{DAE} and the observer candidate \eqref{eq:state-estimator}. If~\eqref{eq:state-estimator} is a state estimator for~\eqref{DAE}, then either
\begin{equation}\label{eq:sln-dont-exist}
\begin{aligned}
    & \Big(\forall\,(x,u,y)~\in~\mathfrak{B}_{\eqref{DAE}}\ \exists\, t_0\in\R:\ \dom(x) \cap [t_0,\infty) = \emptyset \Big)\\
    \vee\quad & \Big(\forall\,\big(\begin{smallpmatrix}  z \\ d \end{smallpmatrix}, \begin{smallpmatrix}  u \\ y \end{smallpmatrix}, z \big)~\in~\mathfrak{B}_{\eqref{eq:state-estimator}}\ \exists\, t_0\in\R:\ \dom(z,d) \cap [t_0,\infty) = \emptyset \Big),
\end{aligned}
\end{equation}
or we have $\rk_{\R(s)} \begin{smallbmatrix} sE -A \\  C \end{smallbmatrix}~=~n$.
\end{Lem}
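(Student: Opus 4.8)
The plan is to prove the contrapositive. I assume that
$\rk_{\R(s)}\begin{smallbmatrix} sE-A\\ C\end{smallbmatrix}<n$ and that the disjunction \eqref{eq:sln-dont-exist} fails, so that both \eqref{DAE} and \eqref{eq:state-estimator} possess solutions whose domain contains a half-line $[t_0,\infty)$. I then exhibit a true trajectory together with an observer trajectory sharing the same external signal $(u,y)$ whose difference does not tend to $0$, which contradicts the state-estimator property \eqref{estimator}.

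First I would produce a non-decaying ``free'' motion of the linear part. Since the column rank of $\begin{smallbmatrix} sE-A\\ C\end{smallbmatrix}$ over $\R(s)$ is less than $n$, its kernel over $\R(s)$ is nontrivial; clearing denominators yields a nonzero polynomial vector $p(s)=\sum_i p_i s^i\in\R[s]^n$ with $(sE-A)p(s)=0$ and $Cp(s)=0$. Comparing coefficients, the associated polynomial curve $x_h(t):=\sum_i c_i t^i$ (with the $c_i$ obtained from the $p_i$ in the standard way) solves the homogeneous DAE $\ddt E x_h = A x_h$ together with $C x_h\equiv 0$. As $x_h$ is a nonzero polynomial, it satisfies $x_h(t)\not\to 0$ as $t\to\infty$; this is precisely the ``free state variable'' that a state estimator must not tolerate.

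Next I take a solution $(x,u,y)\in\mathfrak{B}_{\eqref{DAE}}$ with $[t_0,\infty)\subseteq\dom(x)$ and set $z:=x+x_h$, $d:=0$. The second line of \eqref{eq:state-estimator} holds because $Cz-y+h(u)+L_2 d = C x_h = 0$, while the first line, after subtracting the dynamics of $x$ and using $\ddt E x_h = A x_h$, reduces to the single requirement $B\bigl(f(z,u,y)-f(x,u,y)\bigr)=0$. If this holds, then $(\binom{z}{d},\binom{u}{y},z)\in\mathfrak{B}_{\eqref{eq:state-estimator}}$, and since \eqref{eq:state-estimator} is an acceptor for \eqref{DAE} by Lemma~\ref{Lem-augmented-trajectories-in-V}, this pair is admissible in \eqref{estimator}; but $z-x=x_h\not\to0$, the desired contradiction.

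The main obstacle is exactly the nonlinear condition $B\bigl(f(z,u,y)-f(x,u,y)\bigr)=0$: the perturbation $x_h$ must not activate the nonlinearities. I would secure this by choosing the free motion inside $\ker F\cap\ker J$, for then \eqref{Lipschitz} gives $\|f_L(z,u,y)-f_L(x,u,y)\|\le\|F x_h\|=0$, while $Jz=Jx$ (so that $f_M$, which enters \eqref{DAE} only through $Jx$, is unchanged); hence the whole increment, and a fortiori $B\bigl(f(z,\cdot)-f(x,\cdot)\bigr)$, vanishes. The genuinely delicate point — and where the argument really has to work — is to guarantee that the rank deficiency of $\begin{smallbmatrix}sE-A\\C\end{smallbmatrix}$ yields a non-decaying free motion lying in $\ker C\cap\ker F\cap\ker J$ (or, failing that, one for which the increment is absorbed by the innovation through $L_1 d=-B\phi$, $L_2 d=0$), and to keep $z$ within the open set $\mathcal{X}$ on all of $[t_0,\infty)$; this is the step that must exploit the additive Lipschitz/monotone structure of $f$ to neutralise the nonlinearity without letting $z-x$ decay.
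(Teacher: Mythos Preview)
Your approach has a genuine gap at precisely the point you flag as ``delicate''. The rank deficiency of $\begin{smallbmatrix} sE-A\\ C\end{smallbmatrix}$ guarantees a nontrivial free motion $x_h$ of the homogeneous \emph{linear} DAE with $C x_h\equiv 0$, but it says nothing about $\ker F$ or $\ker J$. If, for instance, $F=I_n$ (a perfectly admissible Lipschitz bound), then $\ker F=\{0\}$ and your device for neutralising the nonlinearity fails outright; yet the lemma must still hold in this case. Your fallback of absorbing the increment via $L_1 d=-B\phi$, $L_2 d=0$ is equally unjustified: $L_1,L_2$ are \emph{fixed} data of the given observer candidate and there is no reason this linear system should be solvable for the particular time-varying $\phi$ that your $x_h$ produces (take $L_1=0$ to see this). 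In short, adding a linear homogeneous solution to a trajectory of a genuinely nonlinear DAE does not produce another trajectory, and the hypotheses give you no leverage to make the nonlinear increment vanish.

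The paper sidesteps this obstacle by \emph{not} trying to superimpose a linear perturbation. It observes that both $x$ (with $d\equiv 0$) and the observer state $z$ satisfy a nonlinear DAE of the form $\ddt E' w = A'w + g(w,u,y,d)$ with $E'=\begin{smallbmatrix}E\\0\end{smallbmatrix}$, $A'=\begin{smallbmatrix}A\\C\end{smallbmatrix}$, where $g$ collects the nonlinearity and the innovation terms. Putting the pencil $sE'-A'$ into quasi-Kronecker form, failure of the rank condition means there is an underdetermined block $sE_P-A_P$; in transformed coordinates a subvector $x_P^2$ appears in \emph{no} algebraic constraint, only on the right-hand side of an ODE for the remaining $P$-variables. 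Thus $x_P^2$ (and likewise $z_P^2$) can be chosen freely as a continuous function on $[t_0,\infty)$, with the nonlinearity simply carried along as a forcing term in that ODE rather than having to vanish. Choosing $z_P^2$ with a different asymptotic behaviour from $x_P^2$ then gives $z-x\not\to 0$, contradicting the state-estimator property. The key difference from your attempt is that the free variable is identified \emph{after} the nonlinearity has been absorbed into the inhomogeneity, so no cancellation is required.
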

\begin{proof}
Let~\eqref{eq:state-estimator} be a state estimator for~\eqref{DAE} and assume that~\eqref{eq:sln-dont-exist} is not true. Set $E' :=\begin{smallbmatrix} E \\ 0 \end{smallbmatrix}$, $A' := \begin{smallbmatrix} A \\ C \end{smallbmatrix}$ and let $(x,u,y)~\in~\mathfrak{B}_{\eqref{DAE}}$ with $[t_0,\infty)\subseteq \dom(x)$ for some $t_0\in\R$. Then we have that, for all $t\ge t_0$,
\begin{equation}\label{eq:DAE-E'-A'}
\ddt E' x(t) = \begin{bmatrix} A \\ C \end{bmatrix} x(t) + \begin{bmatrix} B & L_1 \\ 0 & L_2 \end{bmatrix} \begin{pmatrix} f(x(t),u(t),y(t)) \\ d(t) \end{pmatrix} =: A' x(t) + g(x(t),u(t),y(t),d(t))
\end{equation}
with $d(t) \equiv 0$. Using \cite[Thm. 2.6]{BergTren12} we find matrices $S \in Gl_{l+p}(\R)$, $T \in Gl_{n}(\R)$ such that
\begin{equation}
\label{Trafo:QKF}
S\left(sE' - A'\right)T =
s\begin{bmatrix} E_P & 0 & 0 \\ 0& E_R & 0 \\ 0& 0&E_Q \end{bmatrix} - \begin{bmatrix} A_P & 0 & 0 \\ 0& A_R & 0 \\ 0 & 0 & A_Q \end{bmatrix},
\end{equation}
where
\begin{enumerate}
\item[(i)] $E_P , A_P \in \R^{m_P \times n_P}, m_P < n_P$, are such that $\rk_{\C}(\lambda E_P-A_P) = m_P$ for all $\lambda \in \C \cup \{ \infty \}$,
\item[(ii)] $E_R , A_R \in \R^{m_R \times n_R}, m_R = n_R$, with $sE_R - A_R$ regular,
\item[(iii)] $E_Q , A_Q \in \R^{m_Q \times n_Q}, m_Q > n_Q$, are such that $\rk_{\C}(\lambda E_Q-A_Q) = n_Q$ for all $\lambda \in \C \cup \{ \infty \}$.
\end{enumerate}
We consider the underdetermined pencil $sE_P - A_P$ in \eqref{Trafo:QKF} and the corresponding DAE. If $n_P = 0$, then~\cite[Lem.~3.1]{BergTren12} implies that $\rk_{\R(s)} sE_Q-A_Q = n_Q$ and invoking $\rk_{\R(s)} s E_R- A_R = n_R$ gives that $\rk_{\R(s)} s E' -A' = n$. So assume that $n_p>0$ in the following and set
\begin{equation*}
\begin{pmatrix} x_p \\ x_R \\ x_Q \end{pmatrix} := T^{-1} x, \quad
\begin{pmatrix} g_p \\ g_R \\ g_Q \end{pmatrix} := S g.
\end{equation*}
If $m_p = 0$, then $x_P$ can be chosen arbitrarily. Otherwise, we have
\begin{equation}
\label{eq:sEP-AP}
\ddt E_P x_P(t) = A_P x_P(t) +
g_P \left(T \begin{smallpmatrix} x_P(t) \\ x_R(t) \\ x_Q(t) \end{smallpmatrix},u(t),y(t),d(t)\right).
\end{equation}
As a consequence of~\cite[Lem.~4.12]{BergTren12} we may w.l.o.g.\ assume that $sE_P-A_P = s[I_{m_p},0]-[N,R]$ with $R\in\R^{m_P\times (n_P-m_P)}$ and nilpotent $N\in\R^{m_P\times m_P}$. Partition $x_P = \begin{smallpmatrix} x_P^1 \\ x_P^2 \end{smallpmatrix}$, then~\eqref{eq:sEP-AP} is equivalent to
\begin{align}
\dot{x}_P^1(t) = N x_P^1(t) + R x_P^2(t) + g_P\big( T(x_P^1(t)^\top, x_P^2(t)^\top, x_R(t)^\top, x_Q(t)^\top)^\top,u(t),y(t),d(t)\big)
\end{align}
for all $t\ge t_0$, and hence $x_P^2\in\cC([t_0,\infty)\to\R^{n_P-m_P})$ can be chosen arbitrarily and every choice preserves $[t_0,\infty)\subseteq \dom(x)$. Similarly, if $\big(\begin{smallpmatrix}  z \\ d \end{smallpmatrix}, \begin{smallpmatrix}  u \\ y \end{smallpmatrix}, z \big)~\in~\mathfrak{B}_{\eqref{eq:state-estimator}}$ with $[t_0,\infty)\subseteq \dom(z)$ -- w.l.o.g.\ the same~$t_0$ can be chosen -- then~\eqref{eq:DAE-E'-A'} is satisfied for $x=z$ and, proceeding in an analogous way, $z_P^2$ can be chosen arbitrarily, in particular such that $\lim_{t \to \infty}~z_P^2(t)~\neq~\lim_{t \to \infty}~x_P^2(t)$. Therefore, $\lim_{t \to \infty}~z(t)~-~x(t)~=~\lim_{t \to \infty}~e(t)~\neq~0$, which contradicts that~\eqref{eq:state-estimator} is a state estimator for~\eqref{DAE}. Thus $n_P=0$ and $\rk_{\R(s)} s E' -A' = n$ follows.
\end{proof}

As a consequence of Lemma~\ref{Lem:rank-E'-A'}, a necessary condition for~\eqref{eq:state-estimator} to be a state estimator for~\eqref{DAE} is that $n \leq l+p$. This will serve as a standing assumption in the subsequent sections.

\section{Sufficient conditions for state estimators} \label{Theorems and proofs}

In this section we show that if certain matrix inequalities\index{Matrix inequality} are satisfied, then there exists a state estimator for system~\eqref{DAE} which is of the form~\eqref{eq:state-estimator}. The design parameters of the latter can be obtained from a solution of the matrix inequalities. The proofs of the subsequent theorems are inspired by the work of Lu and Ho~\cite{LuHo06} and by~\cite{Berg19}, where LMIs are considered on the Wong limits only.

\begin{Thm} \label{Version-1}
Consider a system \eqref{DAE} with $n\le l+p$ which satisfies conditions \eqref{Lipschitz} and \eqref{Monotonicity}. Let $k \in \N_0$ and denote with $\mathcal{V}^*_{\left[[\mathcal{E}, 0], [\mathcal{A}, \mathcal{B}] \right]}$ the Wong limit\index{Wong limit} of the pencil $s[\mathcal{E},0]-[\mathcal{A},\mathcal{B}] \in \mathbb{R}[s]^{(l+p)\times(n+k+q_L+q_M)}$, and $\overline{\mathcal{V}}^*_{\left[[\mathcal{E}, 0], [\mathcal{A}, \mathcal{B}] \right]}~:=~\left[I_{n+k}, 0 \right] \mathcal{V}^*_{\left[[\mathcal{E}, 0], [\mathcal{A}, \mathcal{B}] \right]}$. Further let
$\hat{A}~=~\left[ \begin{smallmatrix} A & 0 \\ C & 0 \end{smallmatrix} \right]$,
$H=\left[ \begin{smallmatrix} 0_{n\times n} & 0 \\ 0 & I_k \end{smallmatrix} \right]~=~H^\top$,
$\mathcal{F}~=~[ F , 0]~\in \mathbb{R}^{j \times (n+k)}$, $j~\in~\mathbb{N}$,
$\hat{\Theta}~=~\begin{bmatrix}0& J^\top \Theta \\0& 0 \end{bmatrix}~\in~\mathbb{R}^{(n+k) \times (q_L+q_M)}$,
$\cJ~=~\begin{bmatrix} J^\top J & 0 \\0 & 0 \end{bmatrix}~\in~\mathbb{R}^{(n+k) \times (n+k)}$ and
$\Lambda_{q_L}~:=~\begin{bmatrix} I_{q_L} & 0 \\0 & 0 \end{bmatrix}~\in~\mathbb{R}^{(q_L+q_M) \times (q_L+q_M)}$.
	
If there exist $\delta > 0$, $\mathcal{P}~\in~\mathbb{R}^{(l+p) \times (n+k)}$ and  $\mathcal{K}~\in~\mathbb{R}^{(n+k) \times (n+k)}$ such that
\begin{equation}
\label{eq:Q-MI}
\mathcal{Q} :=
\begin{bmatrix}
\hat{A}^\top\mathcal{P} + \mathcal{P}^\top \hat{A} + H^\top \mathcal{K}^\top + \mathcal{K}H+ \delta \mathcal{F}^\top \mathcal{F} -\mu \cJ & \mathcal{P}^\top\mathcal{B}+\hat{\Theta} \\
\mathcal{B}^\top\mathcal{P}+\hat{\Theta}^\top & -\delta \Lambda_{q_L}
\end{bmatrix} <_{\mathcal{V}^*_{\left[[\mathcal{E}, 0], [\mathcal{A}, \mathcal{B}] \right]}} 0
\end{equation}
and
\begin{equation}
\label{eq:EP-MI}
\mathcal{P}^\top \mathcal{E} = \mathcal{E}^\top \mathcal{P} >_{\overline{\mathcal{V}}^*_{\left[[\mathcal{E}, 0], [\mathcal{A}, \mathcal{B}] \right]}} 0,
\end{equation}
then for all $L_1 \in \mathbb{R}^{l\times k}$, $L_2 \in \mathbb{R}^{p \times k}$ such that $\cP^\top \begin{smallbmatrix} 0 & L_1 \\ 0 & L_2 \end{smallbmatrix}~=~\mathcal{K} H$ the  system~\eqref{eq:state-estimator} is a state estimator\index{State estimator} for~\eqref{DAE}.

Furthermore, there exists at least one such pair $L_1, L_2$ if, and only if, $\im \cK H \subseteq \im \cP^\top$.
\end{Thm}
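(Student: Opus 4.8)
The plan is to establish the main assertion — that \eqref{eq:state-estimator} is a state estimator for \eqref{DAE} — by a Lyapunov argument on the error dynamics \eqref{ED}, and then to settle the concluding equivalence by elementary linear algebra. Acceptor-ness of \eqref{eq:state-estimator} is already supplied by Lemma~\ref{Lem-augmented-trajectories-in-V}, so to verify \eqref{estimator} I fix gains $L_1,L_2$ with $\cP^\top\begin{smallbmatrix} 0 & L_1 \\ 0 & L_2 \end{smallbmatrix}=\cK H$, take arbitrary $(x,u,y)\in\mathfrak{B}_{\eqref{DAE}}$ and $\big(\begin{smallpmatrix} z\\ d\end{smallpmatrix},\begin{smallpmatrix} u\\ y\end{smallpmatrix},z\big)\in\mathfrak{B}_{\eqref{eq:state-estimator}}$ defined on $[t_0,\infty)$, write $e=z-x$ and $w=\begin{smallpmatrix} e\\ d\end{smallpmatrix}$, and propose the candidate $V(t):=w(t)^\top\cP^\top\mathcal{E}\,w(t)$. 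By Lemma~\ref{Lem-augmented-trajectories-in-V} the augmented trajectory $\binom{w}{\phi}$ stays in $\mathcal{V}^*_{[[\mathcal{E},0],[\mathcal{A},\mathcal{B}]]}$ for all $t$, hence $w(t)\in\overline{\mathcal{V}}^*$; then \eqref{eq:EP-MI} makes $V$ positive definite along the trajectory, with two-sided bounds $c_1\|w\|^2\le V\le c_2\|w\|^2$, where $c_1>0$ stems from definiteness of $\cP^\top\mathcal{E}$ on $\overline{\mathcal{V}}^*$ and $c_2=\|\cP^\top\mathcal{E}\|$.

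The heart of the argument is the derivative. Since $\mathcal{E}$ is constant, \eqref{ED} gives $\mathcal{E}\dot w=\mathcal{A}w+\mathcal{B}\phi$, and using the symmetry $\cP^\top\mathcal{E}=\mathcal{E}^\top\cP$ from \eqref{eq:EP-MI} I obtain $\dot V=w^\top(\mathcal{A}^\top\cP+\cP^\top\mathcal{A})w+2w^\top\cP^\top\mathcal{B}\phi$. Splitting $\mathcal{A}=\hat A+\begin{smallbmatrix} 0 & L_1 \\ 0 & L_2 \end{smallbmatrix}$ and inserting the gain condition together with $H=H^\top$ collapses the linear part to $\mathcal{A}^\top\cP+\cP^\top\mathcal{A}=\hat A^\top\cP+\cP^\top\hat A+H^\top\cK^\top+\cK H$, which is exactly the $(1,1)$ block of $\mathcal{Q}$ stripped of its $\delta\mathcal{F}^\top\mathcal{F}-\mu\cJ$ contribution. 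Adding and subtracting the missing pieces yields the identity
\begin{equation*}
\dot V=\binom{w}{\phi}^{\!\top}\mathcal{Q}\binom{w}{\phi}-\delta\|Fe\|^2+\mu\|Je\|^2-2(Je)^\top\Theta\phi_M+\delta\|\phi_L\|^2,
\end{equation*}
where I have used $\mathcal{F}w=Fe$, $w^\top\cJ w=\|Je\|^2$, $w^\top\hat\Theta\phi=(Je)^\top\Theta\phi_M$ and $\phi^\top\Lambda_{q_L}\phi=\|\phi_L\|^2$ for $\phi=\begin{smallpmatrix} \phi_L\\ \phi_M\end{smallpmatrix}$.

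It remains to check that the four non-$\mathcal{Q}$ terms sum to something $\le 0$. The Lipschitz condition \eqref{Lipschitz} gives $\|\phi_L\|\le\|Fe\|$, so $\delta(\|\phi_L\|^2-\|Fe\|^2)\le0$ because $\delta>0$; the monotonicity condition \eqref{Monotonicity}, applicable since $Jz,Jx\in\hat\cX$, gives $(Je)^\top\Theta\phi_M\ge\tfrac12\mu\|Je\|^2$, whence $\mu\|Je\|^2-2(Je)^\top\Theta\phi_M\le0$. Hence $\dot V\le\binom{w}{\phi}^{\!\top}\mathcal{Q}\binom{w}{\phi}$. Because $\mathcal{Q}<_{\mathcal{V}^*}0$ on the finite-dimensional subspace $\mathcal{V}^*$, continuity and homogeneity yield $c>0$ with $v^\top\mathcal{Q}v\le-c\|v\|^2$ for all $v\in\mathcal{V}^*$, so $\dot V\le-c\|w\|^2\le-(c/c_2)V$. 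A Gronwall estimate then forces $V(t)\to0$ exponentially, and $\|e\|^2\le\|w\|^2\le V/c_1\to0$ gives $\lim_{t\to\infty}(z(t)-x(t))=0$, which is precisely \eqref{estimator}.

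For the concluding equivalence, set $\mathcal{L}:=\begin{smallbmatrix} 0 & L_1 \\ 0 & L_2 \end{smallbmatrix}$ and observe that both $\mathcal{L}$ and $\cK H$ have their first $n$ columns equal to zero (the former by construction, the latter because $H$ annihilates them). Thus the first $n$ columns of $\cP^\top\mathcal{L}=\cK H$ read $0=0$ identically, and the equation reduces to its last $k$ columns, namely $\cP^\top\binom{L_1}{L_2}=M$, with $M$ the matrix of the last $k$ columns of $\cK H$ and $\binom{L_1}{L_2}\in\mathbb{R}^{(l+p)\times k}$ unconstrained. A linear equation $\cP^\top X=M$ is solvable iff every column of $M$ lies in $\im\cP^\top$, i.e.\ iff $\im M\subseteq\im\cP^\top$; since the discarded first $n$ columns of $\cK H$ vanish, $\im M=\im\cK H$, which gives the stated criterion. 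The main obstacle is the Lyapunov step: arranging the gain condition so that the $L_i$-dependence cancels exactly into the $\mathcal{Q}$-block, and then checking that the leftover Lipschitz and monotonicity terms carry the correct sign. The subspace restriction is accommodated throughout by Lemma~\ref{Lem-augmented-trajectories-in-V}, which keeps $\binom{w}{\phi}$ inside $\mathcal{V}^*$ so that the restricted definiteness in \eqref{eq:Q-MI} and \eqref{eq:EP-MI} may legitimately be invoked.
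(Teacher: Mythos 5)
Your proposal is correct and follows essentially the same route as the paper's own proof: the same Lyapunov function $w^\top\cP^\top\cE\, w$, the same cancellation of the gain terms into $\cQ$ via $\cP^\top\begin{smallbmatrix} 0 & L_1 \\ 0 & L_2 \end{smallbmatrix}=\cK H$, the same sign checks from \eqref{Lipschitz} and \eqref{Monotonicity}, and the same subspace-restricted eigenvalue/Gronwall estimate justified by Lemma~\ref{Lem-augmented-trajectories-in-V}. The only immaterial differences are presentational: you fold the degenerate case $\overline{\cV}^*_{\left[[\cE,0],[\cA,\cB]\right]}=\{0\}$ (which the paper treats separately) into the trivial observation $w\equiv 0$, you use the global bound $\|\cP^\top\cE\|$ where the paper uses the restricted largest eigenvalue, and you spell out the solvability criterion $\im \cK H\subseteq\im\cP^\top$ that the paper dismisses as clear.
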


\begin{proof}
Using Lemma~\ref{Lem-augmented-trajectories-in-V}, we have that \eqref{eq:state-estimator} is an acceptor\index{Acceptor} for \eqref{DAE}. To show that \eqref{eq:state-estimator} satisfies condition \eqref{estimator} let $t_0 \in \mathbb{R}$ and $(x,u,y,x_o,z) \in \cC([t_0,\infty) \to \mathcal{X} \times \mathcal{U} \times \mathcal{Y} \times \mathcal{X}_o \times \mathbb{R}^n)$ such that $(x,u,y) \in \mathfrak{B}_{\eqref{DAE}}$ and $(x_o,\binom{u}{y},z) \in \mathfrak{B}_{\eqref{eq:state-estimator}}$, with $x_o(t) = \binom{z(t)}{d(t)}$ and $\mathcal{X}_o = \mathcal{X} \times \mathbb{R}^k$.

The last statement of the theorem is clear. Let $\hat{L} = [0_{(l+p)\times n}, \ast]$ be a solution of $\cP^\top \hat{L}~=~\mathcal{K} H$ and $\mathcal{A}~=~\hat{A}~+~\hat{L}$, further set $\eta(t) := \begin{smallpmatrix} e(t)\\ d(t) \end{smallpmatrix}$, where $e(t)~=~z(t)~-~x(t)$. Recall that
\begin{align*}
    \phi(t) &= f(z(t),u(t),y(t))-f(x(t),u(t),y(t)) \\
    &= \begin{pmatrix} f_L(z(t),u(t),y(t))-f_L(x(t),u(t),y(t))\\ f_M(Jz(t),u(t),y(t))-f_M(Jx(t),u(t),y(t))\end{pmatrix} =: \begin{pmatrix} \phi_L(t)\\ \phi_M(t)\end{pmatrix}.
\end{align*}
In view of condition~\eqref{Lipschitz} we have for all $t\geq t_0$ that
\begin{equation}
\label{Lipschitz-geq-0}
\delta (\eta^\top(t) \mathcal{F}^\top\mathcal{F} \eta(t) - \phi_L^\top(t) \phi_L(t)) \geq 0
\end{equation}
and by \eqref{Monotonicity}
\begin{equation}
\label{Monotonicity-geq-0}
([J,0]\eta(t))^\top \Theta \phi_M(t) + \phi_M^\top (t) \Theta^\top  [J,0]\eta(t) - \mu ([J,0]\eta(t))^\top ([J,0]\eta(t)) \geq 0.
\end{equation}
Now assume that \eqref{eq:Q-MI} and \eqref{eq:EP-MI} hold. Consider a Lyapunov function\index{Lyapunov function} candidate
\[
\tilde{V} \colon \mathbb{R}^{n+k} \to \mathbb{R}, \ \ \eta \mapsto \eta^\top \mathcal{E}^\top \mathcal{P} \eta
\]
and calculate the derivative along solutions for $t \geq t_0$:
\begin{align}
\ddt\tilde{V}(\eta(t)) \notag
&= \dot{\eta}^\top(t) \mathcal{E}^\top \mathcal{P} \eta(t) + \eta^\top(t) \mathcal{P}^\top \mathcal{E} \dot{\eta}(t)  \\ \notag
&=\left(\mathcal{A}\eta(t) + \mathcal{B}\phi(t) \right)^\top \mathcal{P} \eta(t) + \eta^\top(t) \mathcal{P}^\top \left(\mathcal{A}\eta(t) + \mathcal{B}\phi(t) \right) \\ \notag
&= \eta^\top(t) \mathcal{A}^\top\mathcal{P}\eta(t) + \eta^\top(t) \mathcal{P}^\top\mathcal{A}\eta(t) + \phi^\top(t) \mathcal{B}^\top\mathcal{P}\eta(t) + \eta^\top(t) \mathcal{P}^\top\mathcal{B}\phi(t) \\ \notag
&= \eta^\top(t) \hat{A}^\top \mathcal{P} \eta(t) + \eta^\top(t) \hat{L}^\top \mathcal{P} \eta(t) + \eta^\top(t) \mathcal{P}^\top \hat{A} \eta(t) + \eta^\top(t) \mathcal{P}^\top \hat{L} \eta(t) \\ \notag
&\ \ \ \ \ + \phi^\top(t) \mathcal{B}^\top\mathcal{P}\eta(t) + \eta^\top(t) \mathcal{P}^\top\mathcal{B}\phi(t) \\ \notag
& \overset{\eqref{Lipschitz-geq-0},\eqref{Monotonicity-geq-0}}{\leq} \eta^\top(t) \left( \hat{A}^\top \mathcal{P} + \mathcal{P}^\top \hat{A} +  \mathcal{K} H + H^\top \mathcal{K}^\top \right) \eta(t)\\ \notag
&\ \ \ \ \ +  \phi^\top(t) \mathcal{B}^\top\mathcal{P}\eta(t) + \eta^\top(t) \mathcal{P}^\top\mathcal{B}\phi(t)  + \delta (\eta^\top(t) \mathcal{F}^\top\mathcal{F} \eta(t) - \phi_L^\top(t) \phi_L(t)) \\ \notag
&\ \ \ \ \ + ([J,0]\eta(t))^\top \Theta \phi_M(t) + \phi_M^\top (t) \Theta^\top [J,0]\eta(t) - \mu ([J,0]\eta(t))^\top ([J,0]\eta(t)) \\ \notag
&= \eta^\top(t) \left( \hat{A}^\top \mathcal{P} + \mathcal{P}^\top \hat{A} +  \mathcal{K} H + H^\top \mathcal{K}^\top + \delta \mathcal{F}^\top\mathcal{F}-\mu \cJ \right) \eta(t)\\ \notag
&\ \ \ \ \ +  \phi^\top(t) \mathcal{B}^\top\mathcal{P}\eta(t) + \eta^\top(t) \mathcal{P}^\top\mathcal{B}\phi(t) \\ \notag
&\ \ \ \ \ + \eta^\top(t) \hat{\Theta} \phi(t) + \phi^\top (t) \hat{\Theta}^\top \eta(t) - \delta \phi^\top(t) \Lambda_{q_L} \phi(t)\\
&= \begin{pmatrix} \eta(t) \\ \phi(t)\end{pmatrix}^\top \underset{=\cQ}{\underbrace{\begin{bmatrix} \hat{A}^\top\mathcal{P} + \mathcal{P}^\top\hat{A} + H^\top \mathcal{K}^\top + \mathcal{K}H+ \delta \mathcal{F}^\top\mathcal{F}-\mu \cJ & \mathcal{P}^\top\mathcal{B}+\hat{\Theta} \\ \mathcal{B}^\top\mathcal{P}+\hat{\Theta}^\top & -\delta \Lambda_{q_L} \end{bmatrix}}} \begin{pmatrix} \eta(t) \\ \phi(t)\end{pmatrix}.\label{time-derivative-V}
\end{align}

Let $S \in \mathbb{R}^{(n+k+q_L+q_M) \times n_{\mathcal{V}}}$ with orthonormal columns be such that  $\im S=\mathcal{V}^*_{\left[[\mathcal{E}, 0], [\mathcal{A}, \mathcal{B}] \right]}$ and $\rk(S)=n_{\mathcal{V}}$. Then inequality~\eqref{eq:Q-MI} reads $\hat{\mathcal{Q}}:=S^\top \mathcal{Q} S < 0$. Denote with $\lambda_{\mathcal{\hat{Q}}}^-$ the smallest eigenvalue of $-\mathcal{\hat{Q}}$, then $\lambda_{\mathcal{\hat{Q}}}^- >0$. Since $S$ has orthonormal columns we have $\|Sv\|~=~\|v\|$ for all $v~\in~\mathbb{R}^{n_{\mathcal{V}}}$.

By Lemma~\ref{Lem-augmented-trajectories-in-V} we have $\begin{smallpmatrix} \eta(t) \\ \phi(t) \end{smallpmatrix} \in \mathcal{V}^*_{\left[[\mathcal{E}, 0], [\mathcal{A}, \mathcal{B}] \right]}$ for all $t \geq t_0$, hence $\begin{smallpmatrix} \eta(t) \\ \phi(t) \end{smallpmatrix} = S v(t)$ for some $v\colon [t_0, \infty) \to \mathbb{R}^{n_{\mathcal{V}}}$.  Then~\eqref{time-derivative-V} becomes
\begin{equation}
\label{est-lyap}
\begin{aligned}
\forall\, t \geq t_0:\,\ddt \tilde{V}(\eta(t)) &\leq \begin{pmatrix} \eta(t) \\ \phi(t) \end{pmatrix}^\top  \mathcal{Q} \begin{pmatrix} \eta(t) \\ \phi(t) \end{pmatrix} = v^\top(t)  \mathcal{\hat{Q}} v(t) \\
& \leq - \lambda_{\mathcal{\hat{Q}}}^- \|v(t)\|^2 = - \lambda_{\mathcal{\hat{Q}}}^-
\left\|\begin{pmatrix} \eta(t) \\ \phi(t)\end{pmatrix}\right\|^2.
\end{aligned}
\end{equation}

Let $\overline{S} \in \mathbb{R}^{(n+k)\times n_{\overline{\mathcal{V}}}}$ with orthonormal columns be such that $\im \overline{S} =\overline{\mathcal{V}}^*_{\left[[\mathcal{E}, 0], [\mathcal{A}, \mathcal{B}] \right]}$ and $\rk(\overline{S})= n_{\overline{\mathcal{V}}}$. Then condition \eqref{eq:EP-MI} is equivalent to $\overline{S}^\top \mathcal{E}^\top \mathcal{P} \overline{S}>0$.
Since $\begin{smallpmatrix} \eta(t) \\ \phi(t) \end{smallpmatrix} \in \mathcal{V}^*_{\left[[\mathcal{E}, 0], [\mathcal{A}, \mathcal{B}] \right]}$ for all $t \geq t_0$ it is clear that $\eta(t) \in \overline{\mathcal{V}}^*_{\left[[\mathcal{E}, 0], [\mathcal{A}, \mathcal{B}] \right]}$ for all $t \geq t_0$. If $\overline{\mathcal{V}}^*_{\left[[\mathcal{E}, 0], [\mathcal{A}, \mathcal{B}] \right]} = \{0\}$ (which also holds when ${\mathcal{V}}^*_{\left[[\mathcal{E}, 0], [\mathcal{A}, \mathcal{B}] \right]}=\{0\}$), then this implies $\eta(t) = 0$, thus $e(t) = 0$ for all $t\ge t_0$, which completes the proof. Otherwise, $n_{\overline{\mathcal{V}}} > 0$ and we set $\eta(t) = \overline{S} \bar{\eta}(t)$ for some $\bar{\eta}\colon [t_0, \infty)\to \mathbb{R}^{n_{\overline{\mathcal{V}}}}$ and denote with $\lambda^+, \lambda^-$ the largest and smallest eigenvalue of $\overline{S}^\top \mathcal{E}^\top \mathcal{P} \overline{S}$, resp., where $\lambda^->0$ is a consequence of~\eqref{eq:EP-MI}. Then we have
\begin{equation}
\label{estimation-lyapunov}
\tilde{V}(\eta(t)) = \eta^\top(t) \mathcal{E}^\top\mathcal{P} \eta(t) = \bar{\eta}^\top(t) \overline{S}^\top\mathcal{E}^\top\mathcal{P}\overline{S}\bar{\eta}(t) \leq \lambda^+ \|\bar{\eta}(t)\|^2 = \lambda^+ \|\eta(t)\|^2
\end{equation}
and, analogously,
\begin{equation}\label{eq:est-lyap-2}
\forall\, t \geq t_0:\, \lambda^- \|\eta(t)\|^2 \leq \bar{\eta}^\top(t)\overline{S}^\top \mathcal{E}^\top\mathcal{P} \overline{S}\bar{\eta}(t) = \tilde{V}(\eta(t)) \leq \lambda^+ \|\eta(t)\|^2.
\end{equation}
Therefore,
\[
\forall\, t \geq t_0 : \, \ddt \tilde{V}(\eta(t)) \overset{(\ref{est-lyap})}{\leq} -\lambda_{\mathcal{\hat{Q}}}^- \left\|\begin{pmatrix} \eta(t) \\ \phi(t)\end{pmatrix}\right\|^2 \leq -\lambda_{\mathcal{\hat{Q}}}^- \|\eta(t)\|^2
\overset{(\ref{estimation-lyapunov})}{\leq} -\frac{\lambda_{\mathcal{\hat{Q}}}^-}{\lambda^+} \,\, \tilde{V}(\eta(t)).
\]
Now, abbreviate $\beta := \frac{\lambda_{\mathcal{\hat{Q}}}^-}{\lambda^+}$ and use Gronwall's Lemma to infer
\begin{equation}
\label{est-V}
\forall\, t \geq t_0:\, \tilde{V}(\eta(t)) \leq \tilde{V}(\eta(0)) e^{-\beta t}.
\end{equation}
Then we obtain
\[
\forall\, t\ge t_0:\ \|\eta(t)\|^2 \stackrel{\eqref{eq:est-lyap-2}}{\leq} \frac{1}{\lambda^-} \tilde{V}(\eta(t)) \overset{\eqref{est-V}}{\leq} \frac{\tilde{V}(\eta(0))}{\lambda^-} e^{-\beta t},
\]
and hence $\lim_{t\to\infty} e(t) = 0$, which completes the proof.
\end{proof}

\begin{Rem}\label{Rem-Thm1}\
\begin{enumerate}
\item Note that $\cA = \hat A + \hat{L}$, where $\hat{L} = [0_{(l+p)\times n}, \ast]$ is a solution of $\cP^\top \hat{L} = \cK H$ and hence the space ${\mathcal{V}_{\left[[\mathcal{E},0],[\mathcal{A},\mathcal{B}] \right]}^*}$ on which~\eqref{eq:Q-MI} is considered depends on the sought solutions~$\cP$ and~$\cK$ as well; using $\cP^\top\cA = \cP^\top \hat A + \cK H$, this dependence is still linear. Furthermore, note that~$\mathcal{K}$ only appears in union with the matrix $H = \left[\begin{smallmatrix} 0 &0\\0&I_k \end{smallmatrix}\right]$, thus only the last~$k$ columns of~$\mathcal{K}$ are of interest. In order to reduce the computational effort it is reasonable to fix the other entries beforehand, e.g.\ by setting them to zero.

\item We stress that the parameters in the description~\eqref{DAE} of the system are not entirely fixed, especially regarding the linear parts. More precisely, an equation of the form $\ddt E x(t) = Ax(t) + f(x(t),u(t))$, where~$f$ satisfies~\eqref{Lipschitz} can equivalently be written as $\ddt E x(t) = f_L(x(t),u(t))$, where $f_L(x,u) = Ax + f(x,u)$ also satisfies~\eqref{Lipschitz}, but with a different matrix~$F$. However, this alternative (with $A=0$) may not satisfy the necessary condition provided in Lemma~\ref{Lem:rank-E'-A'}, which hence should be checked in advance. Therefore, the system class~\eqref{DAE} allows for a certain flexibility and different choices of the parameters may or may not satisfy the assumptions of Theorem~\ref{Version-1}.

\item In the special case $E=0$, i.e., purely algebraic systems of the form $0=Ax(t)+Bf(x(t),u(t),y(t))$, Theorem~\ref{Version-1} may still be applicable. More precisely, condition~\eqref{eq:EP-MI} is satisfied in this case if, and only if, $\overline{\mathcal{V}}^*_{\left[[\mathcal{E}, 0], [\mathcal{A}, \mathcal{B}] \right]} = \{0\}$. This can be true, if for instance $\cB=0$ and $\cA$ has full column rank, because then $\overline{\mathcal{V}}^*_{\left[[\mathcal{E}, 0], [\mathcal{A}, \mathcal{B}] \right]} = [I_{n+k}, 0] \ker [\cA, 0] = \{0\}$.
\end{enumerate}
\end{Rem}

In the following theorem condition~\eqref{eq:EP-MI} is weakened to positive semi-definiteness. As a consequence, the system's matrices have to satisfy additional conditions, which are not present in Theorem~\ref{Version-1}. In particular, we require that~$\cE$ and~$\cA$ are square, which means that $k~=~l+p-n$. Furthermore, we require that $J G_M$ is invertible for a certain matrix~$G_M$ and that the norms corresponding to~$F$ and~$J$ are compatible if both kinds of nonlinearities are present.

\begin{Thm}\label{Version-2}
Use the notation from Theorem~\ref{Version-1} and set $k=l+p-n$. In addition, denote with $\mathcal{V}^*_{[\mathcal{E},\mathcal{A}]}, \mathcal{W}^*_{[\mathcal{E},\mathcal{A}]} \subseteq \mathbb{R}^{n+k}$ the Wong limits of the pencil $s\mathcal{E}-\mathcal{A} \in \mathbb{R}[s]^{(l+p)\times(n+k)}$ and let $V\in \mathbb{R}^{(n+k) \times n_{\mathcal{V}}}$ and $W \in \mathbb{R}^{(n+k) \times n_{\mathcal{W}}} $ be basis matrices of $\mathcal{V}^*_{[\mathcal{E},\mathcal{A}]}$ and $\mathcal{W}^*_{[\mathcal{E},\mathcal{A}]}$, resp., where $n_{\mathcal{V}}=\dim(\mathcal{V}^*_{[\mathcal{E},\mathcal{A}]})$ and $n_{\mathcal{W}}=\dim(\mathcal{W}^*_{[\mathcal{E},\mathcal{A}]})$. Furthermore, denote with $\lambda_{max}(M)$ the largest eigenvalue of a matrix $M$. \\

If there exist $\delta > 0$, $ \mathcal{P} \in \mathbb{R}^{(l+p) \times (n+k)}$ invertible and $\mathcal{K} \in \mathbb{R}^{(n+k) \times (n+k)}$ such that~\eqref{eq:Q-MI} holds and
\begin{equation}
\label{Conditions-Version-2}
	\begin{aligned}
&a)\ \mathcal{E}^\top\mathcal{P}=\mathcal{P}^\top\mathcal{E} \geq_{\overline{\mathcal{V}}^*_{\left[[\mathcal{E}, 0], [\mathcal{A}, \mathcal{B}] \right]}} 0,  \\
&b)\ \text{the pencil} \ s\mathcal{E} - \mathcal{A} \in \mathbb{R}[s]^{(l+p)\times(n+k)} \ \textit{is regular and its index is at most one},  \\
&c)\ F \ \text{is such that} \ \ \|F G_L\| < 1,\ \text{ where } \ G_L := -[I_{n},0]W[0,I_{n+k-r}][\mathcal{E}V,\mathcal{A}W]^{-1} \left[\begin{smallmatrix} B_L \\ 0 \end{smallmatrix}\right],\\
&d)\ JG_M \text{ is invertible and}\ \mu > \lambda_{\rm max}(\Gamma), \ \text{where}\ \ \Gamma := \tilde\Theta + \tilde\Theta^\top,  \  \tilde\Theta := \Theta (J G_M)^{-1},\\
&\quad G_M:= -[I_{n},0]W[0,I_{n+k-r}][\mathcal{E}V,\mathcal{A}W]^{-1} \begin{smallbmatrix} B_M \\ 0 \end{smallbmatrix},\\
&e)\ \text{there exists $\alpha>0$ such that $\|Fx\|\le \alpha \|Jx\|$ for all $x\in\R^n$ and, for}\\
&\quad\ \text{$S:= \tilde\Theta^\top (\Gamma-\mu I_{q_M})^{-1}\tilde\Theta$ we have}\\
&\quad \frac{\alpha \|JG_L\|}{1- \|F G_L\|} \left(\sqrt{\frac{\max\{0,\lambda_{\rm max}(S)\}}{\mu - \lambda_{\rm max}(\Gamma)}} + \|(\Gamma-\mu I_{q_M})^{-1}(\tilde\Theta^\top - \mu I_{q_M})\|\right) < 1,
	\end{aligned}
\end{equation}
then with $L_1 \in \mathbb{R}^{l\times k}$, $L_2 \in \mathbb{R}^{p \times k}$ such that $ \begin{smallbmatrix} 0 & L_1 \\ 0 & L_2 \end{smallbmatrix} = \mathcal{P}^{-\top} \mathcal{K} H$ the system \eqref{eq:state-estimator} is a state estimator\index{State estimator} for~\eqref{DAE}.
\end{Thm}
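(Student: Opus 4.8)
The plan is to re‑use the Lyapunov estimate from the proof of Theorem~\ref{Version-1} and to compensate for the now merely semidefinite weight $\cE^\top\cP$ by a separate, algebraic analysis of the error exploiting the index‑one structure. As in Theorem~\ref{Version-1}, set $\eta=\binom{e}{d}$ and $\tilde V(\eta)=\eta^\top\cE^\top\cP\eta$. The computation culminating in~\eqref{time-derivative-V} and~\eqref{est-lyap} invokes only~\eqref{eq:Q-MI}, \eqref{Lipschitz} and~\eqref{Monotonicity}, not the definiteness of $\cE^\top\cP$, so it carries over verbatim and yields, for all $t\ge t_0$,
\[
\ddt \tilde V(\eta(t)) \le -\lambda_{\hat\cQ}^-\left\|\begin{pmatrix} \eta(t) \\ \phi(t)\end{pmatrix}\right\|^2 \le -\lambda_{\hat\cQ}^-\|\eta(t)\|^2,\qquad \lambda_{\hat\cQ}^->0.
\]
By condition~a) in~\eqref{Conditions-Version-2} together with Lemma~\ref{Lem-augmented-trajectories-in-V} we have $\tilde V(\eta(t))\ge0$, so $\tilde V$ is nonincreasing. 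The new obstacle, relative to Theorem~\ref{Version-1}, is that semidefiniteness removes the lower bound $\lambda^-\|\eta\|^2\le\tilde V$, so the decay of $\tilde V$ no longer controls $\|\eta\|$ by itself.

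Next I would exploit condition~b). Since $s\cE-\cA$ is regular of index at most one, $\R^{n+k}=\cV^*_{[\cE,\cA]}\oplus\cW^*_{[\cE,\cA]}$, we have $\cE W=0$, and $[\cE V,\cA W]$ is invertible. Writing $\eta=V\eta_V+W\eta_W$ and projecting the error DAE~\eqref{ED} onto $\cW^*_{[\cE,\cA]}$ gives the explicit algebraic relation for the algebraic part of the error,
\[
e_W:=[I_n,0]W\eta_W = G_L\phi_L+G_M\phi_M,
\]
with $G_L,G_M$ exactly as in c),d). Moreover, since $\cE^\top\cP$ is symmetric (a) and $\cE W=0$, the mixed terms vanish and $\tilde V(\eta)=\eta_V^\top(V^\top\cE^\top\cP V)\eta_V$ depends only on the differential coordinate $\eta_V$; invertibility of $\cP$ together with a) and the index‑one structure shows $\tilde V$ is coercive in $\eta_V$, so $\tilde V\to0$ will force $e_V:=[I_n,0]V\eta_V\to0$.

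The heart of the argument, and the step I expect to be the main obstacle, is the coupled algebraic small‑gain estimate. Decompose $e=e_V+e_W$. The Lipschitz bound gives $\|\phi_L\|\le\|Fe\|$, and because $\|FG_L\|<1$ (c) one may solve for $\phi_L$ in terms of $e_V$ and $\phi_M$. Since $JG_M$ is invertible (d), the relation above yields $\phi_M=(JG_M)^{-1}(Je-Je_V-JG_L\phi_L)$; inserting this into~\eqref{Monotonicity} and using $\mu>\lambda_{\rm max}(\Gamma)$, i.e.\ $\Gamma-\mu I_{q_M}<0$, a completion‑of‑squares argument built on $S=\tilde\Theta^\top(\Gamma-\mu I_{q_M})^{-1}\tilde\Theta$ bounds $\|Je\|$ in terms of $\|Je_V+JG_L\phi_L\|$. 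Feeding this through the compatibility estimate $\|Fx\|\le\alpha\|Jx\|$ and assembling the gains, condition~e) is precisely the statement that the induced map for $(\phi_L,\phi_M,e_W)$ is a strict contraction, so there is a constant $c$ with $\|e_W\|+\|\phi\|\le c\,\|e_V\|$. The difficulty is that both nonlinearities re‑enter the error through the \emph{single} constraint of the previous step, so $\phi_L$ and $\phi_M$ cannot be estimated independently; verifying that the left‑hand side of~e) is exactly this sub‑unit loop gain — packaging $G_L,G_M,\tilde\Theta,\Gamma,S$ and $\alpha$ into one inequality — is the delicate, constant‑tracking core.

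Finally I would conclude. From the upper bound $\tilde V\le\lambda^+\|\eta\|^2$ on the trajectory space (as in~\eqref{estimation-lyapunov}, which uses only the largest eigenvalue and hence survives under a) and the first‑step estimate, one gets $\ddt\tilde V\le-\beta\tilde V$ with $\beta=\lambda_{\hat\cQ}^-/\lambda^+>0$; Gronwall's lemma then gives $\tilde V(t)\to0$ exponentially. Coercivity of $\tilde V$ in $\eta_V$ forces $\eta_V(t)\to0$, whence $e_V(t)\to0$, and by the small‑gain bound $\phi(t)\to0$ and $e_W(t)\to0$ as well. Therefore $e(t)=e_V(t)+e_W(t)\to0$, which is exactly~\eqref{estimator}, so~\eqref{eq:state-estimator} is a state estimator for~\eqref{DAE}.
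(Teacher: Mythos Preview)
Your proposal is correct and follows essentially the same route as the paper: reuse the Lyapunov derivative estimate from Theorem~\ref{Version-1}, pass to quasi-Weierstra\ss\ coordinates $\eta=V\eta_V+W\eta_W$ via the index-one assumption~\eqref{Conditions-Version-2}\,b), show $\tilde V$ controls only $\eta_V$, read off the algebraic relation $e_W=G_L\phi_L+G_M\phi_M$, and close with the coupled Lipschitz/monotonicity small-gain bound using c)--e). The only point where your sketch is thinner than the paper is the claim that ``invertibility of $\cP$ together with a) and the index-one structure shows $\tilde V$ is coercive in $\eta_V$'': condition~\eqref{Conditions-Version-2}\,a) gives $\cE^\top\cP\ge0$ only on $\overline{\cV}^*_{[[\cE,0],[\cA,\cB]]}$, so to conclude $V^\top\cE^\top\cP V\ge0$ (and then $>0$ by invertibility of $\cP$) one first needs the inclusion $\cV^*_{[\cE,\cA]}\subseteq\overline{\cV}^*_{[[\cE,0],[\cA,\cB]]}$, which the paper proves by a short induction on the Wong sequences. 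With that lemma supplied, your argument is the paper's argument.
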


\begin{proof}
Assume \eqref{eq:Q-MI} and \eqref{Conditions-Version-2} a)-e) hold. Up to equation \eqref{est-V} the proof remains the same as for Theorem~\ref{Version-1}. By~\eqref{Conditions-Version-2} b) we may infer from~\cite[Thm. 2.6]{BergIlch12a} that there exist invertible $\mathcal{M} = \left[M_1^\top , M_2^\top\right]^\top \in \mathbb{R}^{(n+k)\times(l+p)}$ with $M_1 \in \mathbb{R}^{r \times (l+p)}$, $M_2 \in \mathbb{R}^{(n+k-r) \times (l+p)}$ and invertible $\mathcal{N} = \left[N_1 , N_2 \right] \in \mathbb{R}^{(n+k)\times (l+p)}$ with $N_1 \in \mathbb{R}^{(n+k) \times r}$, $N_2 \in \mathbb{R}^{(n+k) \times (l+p-r)}$ such that
\begin{equation}
\label{MEN}
\mathcal{M(E-A)N} = \begin{bmatrix} I_r - A_r & 0 \\ 0 & -I_{n+k-r} \end{bmatrix},
\end{equation}
where $r=\rk(\mathcal{E})$ and $A_r \in \mathbb{R}^{r \times r}$, and that
\begin{equation}
\label{chooseN}
\mathcal{N} = [V, W],\quad \mathcal{M} = [\mathcal{E}V, \mathcal{A}W]^{-1}.
\end{equation}
Let
\begin{equation}
\label{partP}
\mathcal{P} = \mathcal{M}^\top \begin{bmatrix} P_1 & P_2 \\ P_3 & P_4 \end{bmatrix} \mathcal{N}^{-1}
\end{equation}
with $P_1 \in \mathbb{R}^{n_{\mathcal{V}} \times n_{\mathcal{V}}}$, $P_4 \in \mathbb{R}^{n_{\mathcal{W}}\times n_{\mathcal{W}}}$ and $P_2, P_3^\top \in \mathbb{R}^{n_{\mathcal{V}} \times n_{\mathcal{W}}}$. Then condition \eqref{Conditions-Version-2} a) implies $P_1 > 0$ as follows. First, calculate
\begin{equation}
\label{EP=PE}
	\begin{aligned}
\mathcal{E}^\top\mathcal{P}
&= \mathcal{N}^{-T} \begin{bmatrix} I_r & 0 \\ 0 & 0 \end{bmatrix} \mathcal{M}^{-T} \mathcal{M}^{T} \begin{bmatrix} P_1 & P_2 \\ P_3 & P_4 \end{bmatrix} \mathcal{N}^{-1} = \mathcal{N}^{-T} \begin{bmatrix} P_1 & P_2 \\ 0 & 0 \end{bmatrix} \mathcal{N}^{-1} \\
	\end{aligned}
\end{equation}
which gives $P_2 = 0$ as $\mathcal{P}^\top\mathcal{E}=\mathcal{E}^\top\mathcal{P}$. Note that therefore~$P_1$ and~$P_4$ in~\eqref{partP} are invertible since~$\mathcal{P}$ is invertible by assumption. By \eqref{EP=PE} we have
\begin{equation}
\label{EP-VW}
	\begin{aligned}
\mathcal{E}^\top\mathcal{P}
& = \mathcal{N}^{-T} \begin{bmatrix} P_1 & 0 \\ 0 & 0 \end{bmatrix} \mathcal{N}^{-1} = [V, W]^{-T} \begin{bmatrix} P_1 & 0 \\ 0 & 0 \end{bmatrix} [V, W]^{-1}.
	\end{aligned}
\end{equation}
It remains to show $P_1 \geq 0$. Next, we prove the inclusion
\begin{equation}
\label{inclusion1}
\mathcal{V}^*_{[\mathcal{E},\mathcal{A}]}
\subseteq \overline{\mathcal{V}}^*_{\left[[\mathcal{E},0],[\mathcal{A},\mathcal{B}]\right]} = [I_{n+k},0]\mathcal{V}^*_{\left[[\mathcal{E},0],[\mathcal{A},\mathcal{B}]\right]}.
\end{equation}
To this end, we show $\mathcal{V}^{i}_{[\mathcal{E},\mathcal{A}]} \subseteq [I_{n+k, 0}]\mathcal{V}^i_{\left[[\mathcal{E},0],[\mathcal{A},\mathcal{B}]\right]}$ for all $i\in \mathbb{N}_0$. For $i=0$ this is clear. Now assume it is true for some $i \in \mathbb{N}_0$. Then
\begin{equation*}
	\begin{aligned}
[I_{n+k}, 0]\mathcal{V}^{i+1}_{\left[ [\mathcal{E},0],[\mathcal{A},\mathcal{B}] \right]} &= [I_{n+k},0][\mathcal{A},\mathcal{B}]^{-1}([\mathcal{E},0]\mathcal{V}^{i}_{\left[ [\mathcal{E},0],[\mathcal{A},\mathcal{B}] \right]}) \\
&= [I_{n+k}, 0]\setdef{\begin{pmatrix} \eta(t) \\ \phi(t)\end{pmatrix} \in \mathbb{R}^{n+k+q} }{ \mathcal{A}\eta + \mathcal{B} \phi \in \mathcal{E}\left([I_{n+k},0] \mathcal{V}^{i}_{\left[ [\mathcal{E},0],[\mathcal{A},\mathcal{B}] \right]}\right) }\\
&= \setdef{ \eta \in \mathbb{R}^{n+k} }{ \exists\, \phi \in \mathbb{R}^q:\ \mathcal{A}\eta + \mathcal{B}\phi \in \mathcal{E} \overline{\mathcal{V}}^{i}_{\left[ [\mathcal{E},0],[\mathcal{A},\mathcal{B}] \right]}  } \\
& \overset{\phi=0}{\supseteq} \setdef{\eta \in \mathbb{R}^{n+k} }{ \mathcal{A}\eta \in \mathcal{E} \overline{\mathcal{V}}^{i}_{\left[ [\mathcal{E},0],[\mathcal{A},\mathcal{B}] \right]} } \\
&= \mathcal{A}^{-1}\left( \mathcal{E}\overline{\mathcal{V}}^{i}_{\left[ [\mathcal{E},0],[\mathcal{A},\mathcal{B}] \right]}\right) \\
& \supseteq \mathcal{A}^{-1}\left( \mathcal{E} \mathcal{V}^i_{[\mathcal{E},\mathcal{A}]}  \right) = \mathcal{V}^{i+1}_{[\mathcal{E},\mathcal{A}]},
	\end{aligned}
\end{equation*}
which is the statement. Therefore it is clear that $\im V \subseteq \overline{\mathcal{V}}^*_{\left[[\mathcal{E},0],[\mathcal{A},\mathcal{B}]\right]} =\im \overline{V}$, with $\overline{V} \in \mathbb{R}^{(n+k) \times n_{\overline{\mathcal{V}}}}$ a basis matrix of $\overline{\mathcal{V}}^*_{\left[[\mathcal{E},0],[\mathcal{A},\mathcal{B}]\right]}$ and $n_{\overline{\mathcal{V}}} = \dim(\overline{\mathcal{V}}^*_{\left[[\mathcal{E},0],[\mathcal{A},\mathcal{B}]\right]})$. Thus there exists $R \in \mathbb{R}^{n_{\overline{\mathcal{V}}} \times n_{\mathcal{V}}}$ such that $V=\overline{V}R$. Now the inequality $\overline{V}^\top \mathcal{P}^\top\mathcal{E}\overline{V} \geq 0$ holds by condition~\eqref{Conditions-Version-2}~a) and implies
\begin{equation*}
	\begin{aligned}
0 \leq R^\top \overline{V}^\top \mathcal{P}^\top \mathcal{E} \overline{V} R = V^\top \mathcal{P}^\top \mathcal{E} V
&\,\,\,= \left([V,W] \begin{bmatrix} I_{n_{\mathcal{V}}} \\ 0 \end{bmatrix}\right)^\top \mathcal{P}^\top\mathcal{E}
\left([V,W] \begin{bmatrix} I_{n_{\mathcal{V}}} \\ 0 \end{bmatrix} \right) \\
&\overset{(\ref{EP-VW})}{=} [I_{n_{\mathcal{V}}}, 0] \begin{bmatrix} P_1 & 0 \\0 & 0 \end{bmatrix} \begin{bmatrix} I_{n_{\mathcal{V}}} \\ 0 \end{bmatrix} = P_1.
	\end{aligned}
\end{equation*}

Now, let $\mathcal{N}^{-1}\eta(t) = \begin{smallpmatrix} \eta_1(t) \\ \eta_2(t) \end{smallpmatrix}$, with $\eta_1(t) \in \mathbb{R}^{r}$ and $\eta_2(t) \in \mathbb{R}^{n+k-r}$ and consider the Lyapunov function $\tilde{V}(\eta(t))=\eta^\top(t) \mathcal{E}^\top\mathcal{P} \eta(t)$ in new coordinates:

\begin{equation}
\label{LYAPnew}
\begin{aligned}
\forall\, t \geq t_0:\, \tilde{V}(\eta(t))=\eta^\top(t) \mathcal{E}^\top\mathcal{P} \eta(t) &\overset{(\ref{EP=PE})}{=} \begin{pmatrix} \eta_1(t) \\ \eta_2(t) \end{pmatrix}^\top
\begin{bmatrix}
P_1 & 0 \\ 0 & 0
\end{bmatrix}
\begin{pmatrix} \eta_1(t) \\ \eta_2(t)\end{pmatrix} \\
&= \eta_1^\top(t) P_1 \eta_1(t) \geq \lambda_{P_1}^- \|\eta_1(t)\|^2,
\end{aligned}
\end{equation}
where $\lambda_{P_1}^- > 0 $ denotes the smallest eigenvalue of $P_1$. Thus \eqref{LYAPnew} implies
\begin{equation}
\label{eta1}
	\begin{aligned}
\forall\, t \geq t_0:\, \|\eta_1(t)\|^2 &\leq \frac{1}{\lambda_{P_1}^-} \eta^\top(t) \mathcal{E}^\top\mathcal{P} \eta(t) = \frac{1}{\lambda_{P_1}^-} \tilde{V}(\eta(t)) \overset{(\ref{est-V})}{\leq} \frac{\tilde{V}(\eta(0))}{\lambda_{P_1}^-} e^{- \beta t} \underset{t\to\infty}{\longrightarrow} 0.
	\end{aligned}
\end{equation}
Note that, if $\mathcal{V}^*_{[\mathcal{E},\mathcal{A}]} = \{0\}$, then $r=0$ and $\cN^{-1} \eta(t) = \eta_2(t)$, thus the above estimate~\eqref{eta1} is superfluous (and, in fact, not feasible) in this case; it is straightforward to modify the remaining proof to this case. With the aid of transformation~(\ref{MEN}) we have:
\begin{equation}
\label{eta2}
	\begin{aligned}
	 \mathcal{M} \ddt \mathcal{E} \eta(t) &= \mathcal{M} \mathcal{A}\eta(t) + \mathcal{M} \mathcal{B} \phi(t) \\
\iff \mathcal{M} \mathcal{E} \mathcal{N} \ddt \begin{pmatrix} \eta_1(t) \\ \eta_2(t)\end{pmatrix} &= \mathcal{M} \mathcal{A} \mathcal{N} \binom{\eta_1(t)}{\eta_2(t)} + \mathcal{M} \mathcal{B} \phi(t) \\
\iff \begin{bmatrix} I_r & 0 \\ 0 & 0 \end{bmatrix} \ddt \begin{pmatrix} \eta_1(t) \\ \eta_2(t)\end{pmatrix} &= \begin{bmatrix} A_r & 0 \\ 0 & I_{n+k-r} \end{bmatrix} \begin{pmatrix} \eta_1(t) \\ \eta_2(t)\end{pmatrix} + \begin{bmatrix} M_1 \\ M_2 \end{bmatrix} \mathcal{B} \phi(t),
	\end{aligned}
\end{equation}
from which it is clear that $\eta_2(t) = - M_2 \mathcal{B} \phi(t)$. Observe
\[
e(t)=[I_n,0]\eta(t) = [I_n,0]\mathcal{N} \begin{pmatrix} \eta_1(t) \\ \eta_2(t)\end{pmatrix} = [I_n,0]V \eta_1(t) + [I_n,0]W \eta_2(t) =: e_1(t) + e_2(t),
\]
where $\lim_{t\to\infty} e_1(t) = 0$ by~\eqref{eta1}. We show $e_2(t)~=~-[I_n,0] W M_2 \mathcal{B} \phi(t)~\to~0 $ for $t~\to~\infty$. Set
\[
    e_2^L(t):= G_L\phi_L(t),\quad e_2^M(t) := G_M\phi_M(t)
\]
so that $e_2(t) = e_2^L(t) + e_2^M(t)$. Next, we inspect the Lipschitz condition~\eqref{Lipschitz}:
\begin{equation*}
	\begin{aligned}
\|\phi_L(t)\| &\leq \|F e(t)\| \le \|F e_1(t)\| + \| F e_2^L(t)\| + \| F e_2^M(t)\| \\
&\le \|F e_1(t)\| + \|F G_L\| \|\phi_L(t)\| + \| F e_2^M(t)\|,
	\end{aligned}
\end{equation*}
which gives, invoking~\eqref{Conditions-Version-2} c),
\begin{equation}\label{eq:Lip-phiL}
    \|\phi_L(t)\|  \leq  \big( 1- \|F G_L\| \big)^{-1} \big( \|F e_1(t)\| + \| F e_2^M(t)\| \big).
\end{equation}
Set $\hat e(t) := e_1(t) + e_2^L(t) = e_1(t) + G_L\phi_L(t)$  and $\kappa := \frac{\alpha \|JG_L\|}{1- \|F G_L\|}$ and observe that~\eqref{eq:Lip-phiL} together with~\eqref{Conditions-Version-2} e) implies
\begin{equation}\label{eq:est-hate}
    \|J \hat e(t)\|\le \|J e_1(t)\| + \|JG_L\| \|\phi_L(t)\| \stackrel{\eqref{eq:Lip-phiL}}{\le} (1+\kappa) \|J e_1(t)\| + \kappa \|J e_2^M(t)\|.
\end{equation}
Since $J G_M$ is invertible by~\eqref{Conditions-Version-2} d) we find that
\begin{equation}\label{eq:phiM-e2M}
     \phi_M(t) = (J G_M)^{-1} J e_2^M(t),
\end{equation}
and hence the monotonicity condition~\eqref{Monotonicity} yields, for all $t \geq t_0$,
\begin{equation*}
\label{LHS-Monotonicity}
\begin{aligned}
\mu \norm{J{e}(t)}^2 &\le (J e(t))^\top \Theta \phi_M(t) + \phi^\top_M(t) \Theta^\top J {e}(t) \\
&= (J\hat e(t)  + Je_2^M(t))^\top \tilde{\Theta} Je_2^M(t) + ( Je_2^M(t))^\top \tilde{\Theta}^\top (J\hat e(t) + Je_2^M(t)) \\
&= 2 (J\hat e(t))^\top \tilde{\Theta}J e_2^M(t) + (Je_2^M(t))^\top \big(\underbrace{\tilde{\Theta} + \tilde{\Theta}^\top}_{=\Gamma}\big) Je_2^M(t)
\end{aligned}
\end{equation*}
and on the left-hand side
\begin{equation*}
\label{RHS-Monotonicity}
\begin{aligned}
\mu \norm{J{e}(t)}^2 = \mu \left( \norm{J\hat e(t)}^2 + \norm{Je_2^M(t)}^2 + 2(J\hat e(t))^\top (Je_2^M(t)) \right).
\end{aligned}
\end{equation*}
Therefore, we find that
\begin{align*}
  0 &\leq
-\mu \norm{Je_2^M(t)}^2 -\mu \norm{J\hat e(t)}^2
- 2 \mu (J\hat e(t))^\top (Je_2^M(t))\\
&\quad + 2 (J\hat e(t))^\top \tilde{\Theta} (J e_2^M(t)) + (Je_2^M(t))^\top \Gamma (Je_2^M(t)) \\
&= \begin{pmatrix} J \hat e(t)\\ Je_2^M(t)\end{pmatrix}^\top \begin{bmatrix} -\mu I_{q_M} & \tilde\Theta - \mu I_{q_M}\\ \tilde\Theta^\top -\mu I_{q_M} & \Gamma - \mu I_{q_M}\end{bmatrix} \begin{pmatrix} J\hat e(t)\\ Je_2^M(t)\end{pmatrix}.
\end{align*}
Since $\Gamma-\mu I_{q_M}$ is invertible by~\eqref{Conditions-Version-2} d) we may set $\Xi:= \tilde\Theta^\top -\mu I_{q_M}$ and $\tilde e_{2}^M(t) := (\Gamma-\mu I_{q_M})^{-1} \Xi J \hat e(t) + J e_2^M(t)$. Then
\begin{align*}
    0 &\le \begin{pmatrix} J\hat e(t)\\ J e_2^M(t)\end{pmatrix}^\top \begin{bmatrix} -\mu I_{q_M} & \tilde\Theta - \mu I_{q_M}\\ \tilde\Theta^\top -\mu I_{q_M} & \Gamma - \mu I_{q_M}\end{bmatrix} \begin{pmatrix} J \hat e(t)\\ J e_2^M(t)\end{pmatrix} \\
    &= \begin{pmatrix} J \hat e(t)\\ \tilde e_{2}^M(t)\end{pmatrix}^\top \begin{bmatrix} -\mu I_{q_M} - \Xi^\top (\Gamma-\mu I_{q_M})^{-1} \Xi & 0\\ 0 & \Gamma - \mu I_{q_M}\end{bmatrix} \begin{pmatrix} J \hat e(t)\\ \tilde e_{2}^M(t)\end{pmatrix}.
\end{align*}
Therefore, using $\mu - \lambda_{\rm max}(\Gamma) >0$ by~\eqref{Conditions-Version-2} d) and computing
\[
    -\mu I_{q_M} - \Xi^\top (\Gamma-\mu I_{q_M})^{-1} \Xi = \tilde \Theta^\top (\tilde\Theta + \tilde\Theta^\top - \mu I_{q_M})^{-1} \tilde\Theta = S,
\]
we obtain
\[
    0\le \max\{0,\lambda_{\rm max}(S)\} \|J \hat e(t)\|^2 - (\mu - \lambda_{\rm max}(\Gamma)) \|\tilde e_{2}^M(t)\|^2,
\]
which gives
\begin{align*}
    \|Je_2^M(t)\| & \le \|(\Gamma-\mu I_{r_M})^{-1} \Xi\| \|J \hat e(t)\| + \|\tilde e_{2}^M(t)\|\\
    &\le  \left(\sqrt{\frac{\max\{0,\lambda_{\rm max}(S)\}}{\mu - \lambda_{\rm max}(\Gamma)}} + \|(\Gamma-\mu I_{r_M})^{-1} \Xi\|\right) \|J \hat e(t)\|\\
    &\stackrel{\eqref{eq:est-hate}}{\le} \left(\sqrt{\frac{\max\{0,\lambda_{\rm max}(S)\}}{\mu - \lambda_{\rm max}(\Gamma)}} + \|(\Gamma-\mu I_{r_M})^{-1} \Xi\|\right) \big( (1+\kappa) \|J e_1(t)\| + \kappa \| Je_2^M(t)\|\big).
\end{align*}
It then follows from~\eqref{Conditions-Version-2} e) that $\lim_{t\to\infty} J e_2^M(t) = 0$, and additionally invoking~\eqref{eq:Lip-phiL} and~\eqref{eq:phiM-e2M} gives $\lim_{t\to\infty} \phi_L(t) = 0$ and $\lim_{t\to\infty} \phi_M(t) = 0$, thus $\norm{e_2(t)} \leq \norm{G_L \phi_L(t)} + \norm{G_M \phi_M(t)} \underset{t\to\infty}{\longrightarrow}~0$, and finally $\lim_{t\to\infty} e(t) = 0$.
\end{proof}

\begin{Rem}\ \label{Rem-Thm2}
\begin{enumerate}
\item If the nonlinearity~$f$ in~\eqref{DAE} consists only of $f_L$ satisfying the Lipschitz condition, then the conditions~\eqref{Conditions-Version-2} d) and~e) are not present. If it consists only of the monotone part~$f_M$, then the conditions~\eqref{Conditions-Version-2} c) and~e) are not present. In fact, condition~\eqref{Conditions-Version-2}~e) is a ``mixed condition'' in a certain sense which states additional requirements on the combination of both (classes of) nonlinearities.

\item The following observation is of practical interest. Whenever~$f_L$ satisfies~\eqref{Lipschitz} with a certain matrix~$F$, it is obvious that~$f_L$ will satisfy~\eqref{Lipschitz} with any other~$\tilde{F}$ such that $\norm{F} \leq \norm{\tilde{F}}$. However, condition~\eqref{Conditions-Version-2} c) states an upper bound on the possible choices of~$F$. Similarly, if~$f_M$ satisfies~\eqref{Monotonicity} with certain~$\Theta$ and~$\mu$, then~$f_M$ satisfies~\eqref{Monotonicity} with any~$\tilde{\mu} \leq \mu$, for a fixed~$\Theta$. On the other hand, condition~\eqref{Conditions-Version-2} d) states lower bounds for~$\mu$ (involving~$\Theta$ as well). Additional bounds are provided by \eqref{eq:Q-MI} and condition~\eqref{Conditions-Version-2} e). Analogous thoughts hold for the other parameters. Hence~$F$, $\delta$, $J$, $\Theta$ and $\mu$ can be utilized in solving the conditions of Theorems~\ref{Version-1} and~\ref{Version-2}.
\item The condition $\|Fx\| \leq \alpha \|Jx\|$ from \eqref{Conditions-Version-2}~e) is quite restrictive since it connects the Lipschitz estimation of $f_L$ with the domain of $f_M$. This relation is far from natural and relaxing it is a topic of future research. The inequality would always be satisfied for $J=I_n$ by taking $\alpha = \|F\|$, however in view of the monotonicity condition~\eqref{Monotonicity}, the inequality~\eqref{eq:Q-MI} and conditions~\eqref{Conditions-Version-2} this would be even more restrictive.

\item In the case $E=0$ the assumptions of Theorem~\ref{Version-2} simplify a lot. In fact, we may calculate that in this case we have $\mathcal{V}_{\left[[\mathcal{E},0],[\mathcal{A},\mathcal{B}] \right]}^* = \ker [\cA, \cB]$ and hence the inequality~\eqref{eq:Q-MI} becomes
\begin{align*}
& \cQ = \begin{bmatrix} \mathcal{A}^\top \mathcal{P} + \mathcal{P}^\top \mathcal{A}+ \delta \mathcal{F}^\top \mathcal{F}-\mu \cJ & \mathcal{P}^\top \mathcal{B}+\hat{\Theta} \\ \mathcal{B}^\top\mathcal{P}+\hat{\Theta}^\top & - \delta \Lambda_{q_L} \end{bmatrix} <_{\ker [\cA, \cB]} 0 \\
  \iff \quad &\forall \begin{smallpmatrix} \eta\\ \phi \end{smallpmatrix} \in \ker [\cA, \cB]:\ \ \begin{smallpmatrix} \eta\\ \phi \end{smallpmatrix}^\top \cQ \begin{smallpmatrix} \eta\\ \phi \end{smallpmatrix} < 0  \\
\iff\quad  &\forall \begin{smallpmatrix} \eta\\ \phi \end{smallpmatrix} \in \ker [\cA, \cB]:\ \ \eta^\top\left(\mathcal{A}^\top \mathcal{P} + \mathcal{P}^\top\mathcal{A} + \delta \mathcal{F}^\top \mathcal{F} - \mu \cJ\right) \eta - \delta \norm{\Lambda_{q_L} \phi}^2 \\
 &+ \phi^\top \left(\mathcal{B}^\top \mathcal{P} + \hat{\Theta}\right) \eta + \eta^\top \left(\mathcal{P}^\top \mathcal{B} + \hat{\Theta}^\top \right) \phi < 0  \\
  \iff\quad  &\forall \begin{smallpmatrix} \eta\\ \phi \end{smallpmatrix} \in \ker [\cA, \cB]:\ \ (\underbrace{\eta^\top \mathcal{A}^\top + \phi^\top(t) \mathcal{B}^\top}_{=0}) \mathcal{P} \eta + \eta^\top \mathcal{P}^\top (\underbrace{\mathcal{A}\eta + \mathcal{B} \phi}_{=0})\\
 &+ \delta \big(\norm{\mathcal{F}\eta}^2 - \norm{\Lambda_{q_L} \phi}^2 \big) + \eta^\top \hat{\Theta} \phi + \phi^\top \hat{\Theta}^\top \eta - \mu \eta^\top \cJ \eta < 0.
\end{align*}
Now,  $\mathcal{A}$ is invertible by \eqref{Conditions-Version-2} b) and hence $\eta = -\mathcal{A}^{-1} \mathcal{B} \phi$. Therefore, the inequality~\eqref{eq:Q-MI} is equivalent to
\begin{multline*}
\delta\left( (\mathcal{F}\mathcal{A}^{-1} \mathcal{B})^\top (\mathcal{F}\mathcal{A}^{-1} \mathcal{B}) - \Lambda_{q_L}\right) - (\mathcal{A}^{-1} \mathcal{B})^\top \hat\Theta - \hat{\Theta}^\top \mathcal{A}^{-1} \mathcal{B} \\
- \mu  (\mathcal{A}^{-1} \mathcal{B})^\top \cJ (\mathcal{A}^{-1} \mathcal{B}) < 0,
\end{multline*}
which is of a much simper shape.

%

\item The conditions presented in Theorems~\ref{Version-1} and \ref{Version-2} are sufficient conditions only. The following example does not satisfy the conditions in the theorems but a state estimator exists for it. Consider $\dot{x}(t) = -x(t)$, $y(t)=0$, then the system $\dot{z}(t) = -z(t)$ , $0=d_1(t) - d_2(t)$ of the form~\eqref{eq:state-estimator} with $L_1 = [0,0]$ and $L_2 = [1, -1]$ is obviously a state estimator, since the first equation is independent of the innovations~$d_1$,~$d_2$ and solutions satisfy $\lim_{t\to\infty} z(t) - x(t) = 0$. However, we have $n + k = 3 >2=l+p$ and therefore Theorem~\ref{Version-2} is not applicable. Furthermore, the assumptions of Theorem~\ref{Version-1} are not satisfied since 
    \[
        \overline{\cV}^*_{[[\cE,0],[\cA,\cB]]} = \cV^*_{[\cE,\cA]} = \im \begin{bmatrix} 1&0\\ 0&1\\ 0&1\end{bmatrix}\quad \text{and}\quad \cE \cV =  \begin{bmatrix} 1&0\\ 0&0\end{bmatrix},
    \]
    by which $\ker \cE \cV \neq \{0\}$ and hence~\eqref{eq:EP-MI} cannot be true. We also like to stress that therefore, in virtue of Lemma~\ref{Lem:rank-E'-A'}, $n\le l+p$ is a necessary condition for the existence of a state estimator of the form~\eqref{eq:state-estimator}, but $n+k \le l+p$ is not.
\end{enumerate}
\end{Rem}


\section{Sufficient conditions for asymptotic observers}\label{Sufficient conditions for asymptotic observers}\index{Asymptotic observer}

In the following theorem some additional conditions are asked to be satisfied in order to guarantee that the resulting observer candidate is in fact an asymptotic observer\index{Asymptotic observer}, i.e., it is a state estimator and additionally satisfies~\eqref{eq-observer}. To this end, we utilize an implicit function theorem from~\cite{GutuJara07}.\\

\begin{Thm}\label{Version-3}
Use the notation from Theorem~\ref{Version-2} and assume that $\cX=\R^n$, $\cU=\R^m$ and $\cY=\R^p$. Additionally, let $\mathcal{M}$, $\mathcal{N} \in \mathbb{R}^{(n+k)\times(l+p)}$ be as in~\eqref{chooseN}, set $\bar{\mathcal{N}} := [I_n,0]\mathcal{N}$ and $\left[ \begin{smallmatrix} \hat{B}_1 \\ \hat{B}_2 \end{smallmatrix}\right] := \mathcal{M} \left[\begin{smallmatrix} B_L&B_M&0\\0&0&I_p \end{smallmatrix}\right]$, where $\hat{B}_1 \in \mathbb{R}^{r \times (q_L+q_M+p)}$, $\hat{B}_2 \in \mathbb{R}^{(n+k-r) \times (q_L+q_M+p)}$. Let
\begin{equation*}
	\begin{aligned}
G\colon \mathbb{R}^r \times \mathbb{R}^{n+k-r} \times \mathbb{R}^m \times \mathbb{R}^p \to \mathbb{R}^{n+k-r},\
(x_1,x_2,u,y) \mapsto  x_2 + \hat{B}_2 \left( \begin{matrix}f_L\big(\bar{\mathcal{N}}\binom{x_1}{x_2},u,y\big)\\f_M\big(J\bar{\mathcal{N}}\binom{x_1}{x_2},u,y\big)\\h(u) - y \end{matrix} \right)
	\end{aligned}
\end{equation*}
and $Z_0 := \setdef{ (x_1,x_2,u,y) \in \mathbb{R}^r \times \mathbb{R}^{n+k-r} \times \mathbb{R}^m \times \mathbb{R}^p }{ G(x_1,x_2,u,y) = 0 }$.

If there exist $\delta > 0$, $ \mathcal{P} \in \mathbb{R}^{(l+p) \times (n+k)}$ invertible and $\mathcal{K} \in \mathbb{R}^{(l+p) \times (n+k)}$ such that~\eqref{eq:Q-MI} and~\eqref{Conditions-Version-2} hold and in addition
\begin{equation}
\label{condition-B-3}
	\begin{aligned}
&a)\ \tfrac{\partial}{\partial x_2} G (x_1,x_2,u,y) \text{ is invertible for all } (x_1,x_2,u,y) \in Z_0, \\
&b)\ \text{there exists}  \ \omega\in\cC([0, \infty) \to (0,\infty)) \   \text{nondecreasing with} \
\int_0^\infty \frac{{\rm d}t}{\omega(t)} = \infty \ \   \text{such that}\\
&\quad\ \ \ \forall\, (x_1,x_2,u,y) \in Z_0:\ \norm{\left( \tfrac{\partial}{\partial x_2} G (x_1,x_2,u,y) \right)^{-1}}\norm{\tfrac{\partial}{\partial (x_1,u,y)} G (x_1,x_2,u,y) } \leq \omega(\norm{x_2}),\\
&c)\ Z_0 \text{ is connected}, \\
&d)\ f_M \ \ \text{is locally Lipschitz continuous in the first variable},
	\end{aligned}
\end{equation}
then with $L_1 \in \mathbb{R}^{l\times k}$, $L_2 \in \mathbb{R}^{p \times k}$ such that $\left[ \begin{smallmatrix} 0 & L_1 \\ 0 & L_2 \end{smallmatrix} \right] = \mathcal{P}^{-\top} \mathcal{K} H$ the system \eqref{eq:state-estimator} is an asymptotic observer\index{Asymptotic observer} for~\eqref{DAE}.
\end{Thm}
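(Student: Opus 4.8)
The plan is to invoke Theorem~\ref{Version-2}: under \eqref{eq:Q-MI} and \eqref{Conditions-Version-2} the system \eqref{eq:state-estimator} is already a state estimator for \eqref{DAE}, so it remains only to establish the observer property \eqref{eq-observer}; the two together yield an asymptotic observer. To this end I would fix an open interval $I$, a time $t_0\in I$, a solution $(x,u,y)\in\mathfrak{B}_{\eqref{DAE}}$ and a trajectory $(\binom{z}{d},\binom{u}{y},z)\in\mathfrak{B}_{\eqref{eq:state-estimator}}$ with $Ez(t_0)=Ex(t_0)$, and aim to prove $z=x$ on $I$. The key observation is that, by the acceptor property from Lemma~\ref{Lem-augmented-trajectories-in-V}, the pair $(\binom{x}{0},\binom{u}{y},x)$ is itself a solution of \eqref{eq:state-estimator}. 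Thus I would compare the two solutions $\binom{z}{d}$ and $\binom{x}{0}$ of the \emph{same} DAE \eqref{eq:state-estimator}, driven by the same $(u,y)$, whose difference is exactly $\eta=\binom{e}{d}$ with $e=z-x$.

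Next I would pass to the coordinates of Theorem~\ref{Version-2}. Since $s\mathcal{E}-\mathcal{A}$ is regular of index at most one by \eqref{Conditions-Version-2}~b), the transformation \eqref{MEN}--\eqref{chooseN} applies; writing $\mathcal{N}^{-1}\binom{z}{d}=\binom{\zeta_1}{\zeta_2}$ and $\mathcal{N}^{-1}\binom{x}{0}=\binom{\xi_1}{\xi_2}$ with first blocks in $\mathbb{R}^r$, the same computation that led to \eqref{eta2} shows that each trajectory splits into a differential part $\dot\zeta_1=A_r\zeta_1+\hat B_1(\cdots)$ and an algebraic part that is precisely $G(\zeta_1,\zeta_2,u,y)=0$ (and likewise for $\xi$). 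This is exactly why $G$ and $Z_0$ are defined as in the statement, so that both $(\zeta_1(t),\zeta_2(t),u(t),y(t))$ and $(\xi_1(t),\xi_2(t),u(t),y(t))$ lie in $Z_0$ for all $t$. Because $\mathcal{E}\eta=\binom{Ee}{0}$ and $\mathcal{M}\mathcal{E}\mathcal{N}=\begin{smallbmatrix}I_r&0\\0&0\end{smallbmatrix}$, the hypothesis $Ee(t_0)=0$ is equivalent to $\eta_1(t_0)=0$, i.e.\ $\zeta_1(t_0)=\xi_1(t_0)$.

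The heart of the argument is then to turn the algebraic constraint into a genuine function. Conditions \eqref{condition-B-3}~a)--d) are tailored to the global implicit function theorem of Gut\'u and Jaramillo~\cite{GutuJara07}: invertibility of $\tfrac{\partial}{\partial x_2}G$ on $Z_0$, the Hadamard-type growth bound in b) (with $\int_0^\infty \mathrm{d}t/\omega(t)=\infty$), connectedness of $Z_0$ in c), and local Lipschitz regularity in d). I would invoke that theorem to obtain a single globally defined map $g$ with $x_2=g(x_1,u,y)$ for every $(x_1,x_2,u,y)\in Z_0$; by its uniqueness this forces $\zeta_2(t)=g(\zeta_1(t),u(t),y(t))$ and $\xi_2(t)=g(\xi_1(t),u(t),y(t))$. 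Substituting back into the differential part eliminates $\zeta_2$ and yields a closed ODE $\dot\zeta_1=\Phi(\zeta_1,u,y)$ satisfied by both $\zeta_1$ and $\xi_1$ with the same right-hand side. Since $f_L$ is Lipschitz by \eqref{Lipschitz}, $f_M$ is locally Lipschitz by \eqref{condition-B-3}~d), and $g$ is as regular as the implicit function theorem provides, $\Phi$ is locally Lipschitz in its first argument, so the uniqueness part of Picard--Lindel\"of together with $\zeta_1(t_0)=\xi_1(t_0)$ gives $\zeta_1\equiv\xi_1$, whence $\zeta_2\equiv\xi_2$ through $g$ and therefore $z=\bar{\mathcal{N}}\binom{\zeta_1}{\zeta_2}=\bar{\mathcal{N}}\binom{\xi_1}{\xi_2}=x$.

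I expect the main obstacle to be the correct and rigorous application of the global implicit function theorem: one must verify that the time-varying parameters $(u(t),y(t))$ do not spoil its hypotheses, that the single-valued branch furnished by connectedness genuinely captures both trajectories, and that the growth condition b) is precisely what upgrades a merely local solvability of $G=0$ to the global identification of $g$ along the entire interval $I$. The remaining reduction to ODE uniqueness is then routine.
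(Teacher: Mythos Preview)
Your proposal is correct and follows essentially the same route as the paper: reduce to the observer property via Theorem~\ref{Version-2}, pass both $\binom{x}{0}$ and $\binom{z}{d}$ through the quasi-Weierstra\ss\ transformation \eqref{MEN}--\eqref{chooseN}, invoke the global implicit function theorem of~\cite{GutuJara07} (using \eqref{condition-B-3}~a)--c)) to write the algebraic component as $x_2=g(x_1,u,y)$, and finish with Picard--Lindel\"of on the resulting ODE for the differential component. One small clarification: condition~\eqref{condition-B-3}~d) is not part of the hypotheses of the global implicit function theorem but is used afterwards, together with the differentiability of $g$ (which the paper verifies explicitly via implicit differentiation), to ensure the reduced ODE has a locally Lipschitz right-hand side.
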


\begin{proof}
Since~\eqref{eq:state-estimator} is a state estimator for~\eqref{DAE} by Theorem~\ref{Version-2}, it remains to show that~\eqref{eq-observer} is satisfied. To this end, let $I \subseteq \mathbb{R}$ be an open interval, $t_0 \in I$, and
$(x,u,y,z,d)\in \cC(I \to \mathbb{R}^n \times \mathbb{R}^m \times \mathbb{R}^p \times \mathbb{R}^n \times \mathbb{R}^k)$
such that $(x,u,y) \in \mathfrak{B}_{(\ref{DAE})}$ and $\left( \binom{z}{d}, \binom{u}{y}, z \right) \in \mathfrak{B}_{(\ref{eq:state-estimator})}$. Recall that $B = [B_L, B_M]$ and $f(x,u,y) = \begin{smallpmatrix} f_L(x,u,y)\\ f_M(Jx,u,y)\end{smallpmatrix}$. Now assume $Ex(t_0) = E z(t_0)$ and recall the equations\\
\begin{minipage}[c]{1.0\textwidth}
	\begin{minipage}[c]{0.43\textwidth}
\begin{equation*}
	\begin{aligned}
\ddt Ex(t) &= A x(t) + B f(x(t),u(t),y(t)), \\
y(t) &= Cx(t) + h(u(t)),
	\end{aligned}
\end{equation*}
	\end{minipage}%
	\hfill
	\begin{minipage}[c]{0.53\textwidth}
\begin{equation*}
	\begin{aligned}
\ddt Ez(t) &= A z(t) + B f(z(t),u(t),y(t)) + L_1 d(t), \\
0 &= Cz(t) -y(t) + h(u(t))+ L_2 d(t).
	\end{aligned}
\end{equation*}
	\end{minipage}%
\end{minipage}
\ \\
This is equivalent to
\begin{equation}
\label{recall-DAE}
	\begin{aligned}
\ddt \mathcal{E} \begin{pmatrix} x(t) \\ 0\end{pmatrix} &= \mathcal{A}\binom{x(t)}{0} + \begin{bmatrix} B & 0 \\ 0 & I_p \end{bmatrix} \begin{pmatrix} f(x(t),u(t),y(t)) \\ h(u(t)) - y(t)\end{pmatrix}
	\end{aligned}
\end{equation}
and
\begin{equation*}
	\begin{aligned}
\ddt \mathcal{E} \begin{pmatrix} z(t) \\ d(t) \end{pmatrix} &= \mathcal{A}\binom{z(t)}{d(t)} + \begin{bmatrix} B & 0 \\ 0 & I_p \end{bmatrix} \begin{pmatrix} f(x(t),u(t),y(t)) \\ h(u(t)) - y(t)\end{pmatrix}.
	\end{aligned}
\end{equation*}
Let $\begin{smallpmatrix} x_1(t)\\ x_2(t)\end{smallpmatrix} = \mathcal{N}^{-1} \begin{smallpmatrix} x(t)\\ 0\end{smallpmatrix}$ and $\begin{smallpmatrix} z_1(t)\\ z_2(t)\end{smallpmatrix} = \mathcal{N}^{-1} \begin{smallpmatrix} z(t)\\ d(t)\end{smallpmatrix}$. Application of transformations~\eqref{MEN} to~\eqref{recall-DAE} gives
\begin{equation*}
	\begin{aligned}
\begin{bmatrix} I_r & 0 \\ 0 & 0 \end{bmatrix} \begin{pmatrix} \dot{x_1}(t) \\ \dot{x_2}(t) \end{pmatrix}  = \begin{bmatrix} A_r & 0 \\ 0 & I_{n+k-r} \end{bmatrix} \begin{pmatrix} x_1(t) \\ x_2(t) \end{pmatrix} &+ \begin{bmatrix} \hat{B}_1 \\ \hat{B}_2 \end{bmatrix} \begin{pmatrix}f(\bar{\mathcal{N}}\binom{x_1(t)}{x_2(t)},u(t),y(t))\\h(u(t)) - y(t)\end{pmatrix} \\
	\end{aligned}
\end{equation*}
or, equivalently,
\begin{equation*}
	\begin{aligned}
\dot{x_1}(t) &= A_r x_1(t) + \hat{B}_1
\begin{pmatrix} f(\bar{\mathcal{N}}\binom{x_1(t)}{x_2(t)},u(t),y(t)) \\ h(u(t)) - y(t)\end{pmatrix} \\
0 &=\underbrace{ x_2(t) + \hat{B}_2
\begin{pmatrix} f(\bar{\mathcal{N}}\binom{x_1(t)}{x_2(t)},u(t),y(t)) \\ h(u(t)) - y(t) \end{pmatrix}}_{= G(x_1(t),x_2(t),u(t),y(t))}
	\end{aligned}
\end{equation*}
with $\bar{\mathcal{N}} := [I_n, 0] \mathcal{N}$ and $\mathcal{M} \left[ \begin{smallmatrix} B & 0 \\ 0 & I_p \end{smallmatrix} \right] = \left[ \begin{smallmatrix} \hat{B}_1 \\ \hat{B}_2 \end{smallmatrix} \right]$.\\
Since \eqref{condition-B-3}~a)--c) hold, the global implicit function theorem in~\cite[Cor.~5.3]{GutuJara07} ensures the existence of a unique continuous map $g\colon \mathbb{R}^r \times \mathbb{R}^m \times \mathbb{R}^p \to \mathbb{R}^{n+k-r}$ such that $G(x_1,g(x_1,u,y),u,y)=~0$ for all $(x_1,u,y) \in \mathbb{R}^r \times \mathbb{R}^m \times \mathbb{R}^p$, and hence $x_2(t)=g(x_1(t),u(t),y(t))$ for all $t \in I$.
Thus~$x_1$ solves the ordinary differential equation
\begin{equation}
\label{ODE}
\dot{x}_1(t) = A_r x_1(t) + \hat{B}_1 \left( \begin{matrix}f(\bar{\mathcal{N}}\binom{x_1(t)}{g(x_1(t),u(t),y(t))},u(t),y(t))\\h(u(t)) - y(t)\end{matrix}\right)
\end{equation}
with initial value $x_1(t_0)$ for all $t \in I$; and $z_1(t)$ solves the same ODE with same initial value $z_1(t_0)=x_1(t_0)$. This can be seen as follows: $Ex(t_0)=Ez(t_0)$ implies $\mathcal{E}\begin{smallpmatrix} x(t_0)\\ 0\end{smallpmatrix} = \mathcal{E}\begin{smallpmatrix} z(t_0)\\ d(t_0)\end{smallpmatrix}$, and the transformation (\ref{MEN}) gives
\begin{equation*}
	\begin{aligned}
	\mathcal{E} \begin{pmatrix} x(t_0) \\ 0 \end{pmatrix}	 &= \mathcal{M}^{-1} \begin{bmatrix} I_r &0\\0&0	\end{bmatrix} \mathcal{N}^{-1} \begin{pmatrix} x(t_0) \\ 0 \end{pmatrix} = \mathcal{M}^{-1} \begin{bmatrix} I_r &0\\0&0	\end{bmatrix} \begin{pmatrix} x_1(t_0) \\ x_2(t_0) \end{pmatrix} = \mathcal{M}^{-1} \begin{pmatrix} x_1(t_0) \\ 0 \end{pmatrix}, \\
	\mathcal{E} \begin{pmatrix} z(t_0) \\ d(t_0) \end{pmatrix} &= \mathcal{M}^{-1} \begin{bmatrix} I_r &0\\0&0	\end{bmatrix} \mathcal{N}^{-1} \begin{pmatrix} z(t_0) \\ d(t_0) \end{pmatrix} = \mathcal{M}^{-1} \begin{bmatrix} I_r &0\\0&0	\end{bmatrix} \begin{pmatrix} z_1(t_0) \\ z_2(t_0) \end{pmatrix} = \mathcal{M}^{-1}\begin{pmatrix} z_1(t_0) \\ 0 \end{pmatrix},
	\end{aligned}
\end{equation*}
which implies $x_1(t_0) = z_1(t_0)$.

Furthermore, $g(x_1,u,y)$ is differentiable, which follows from the properties of $G$: Let $v=(x_1,u,y)$ and write $G(x_1,g(v),u,y)=\tilde{G}(v,g(v))$, then taking the derivative yields
\begin{equation*}
	\begin{aligned}
	\frac{\rm{d}}{\rm{dv}}\tilde{G}(v,g(v)) &= \frac{\partial}{\partial v}\tilde G(v,g(v)) + \frac{\partial}{\partial g}\tilde G(v,g(v))g'(v) = 0 \\
	& \Rightarrow g'(v) = - \left( \frac{\partial \tilde G(v,g(v))}{\partial g} \right)^{-1} \left( \frac{\partial \tilde G(v,g(v))}{\partial v}\right),
	\end{aligned}
\end{equation*}
which is well defined by assumption. Hence $g(x_1,u,y)$ is in particular locally Lipschitz. Since~$f_L$ is globally Lipschitz in the first variable by~\eqref{Lipschitz} and~$f_M$ is locally Lipschitz in the first variable by assumption~\eqref{condition-B-3}~d), $(x_1,u,y) \mapsto f\left(\bar{\mathcal{N}}\binom{x_1(t)}{g(x_1(t),u(t),y(t))},u(t),y(t)\right)$ is locally Lipschitz in the first variable and therefore the solution of~\eqref{ODE} is unique by the Picard-Lindel\"of theorem, see e.g.~\cite[Thm.~4.17]{LogeRyan14}; this implies $z_1(t) = x_1(t)$ for all $t \in I$. Furthermore,
\begin{equation*}
x_2(t) = g(x_1(t),u(t),y(t)) = g(z_1(t),u(t),y(t)) = z_2(t)
\end{equation*}
for all $t \in I$, and hence~\eqref{eq:state-estimator} is an observer for~\eqref{DAE}. Combining this with the fact that~\eqref{eq:state-estimator} is already a state estimator,~\eqref{eq:state-estimator} is an asymptotic observer for~\eqref{DAE}.
\end{proof}

\section{Examples} \label{examples}

We present some instructive examples to illustrate Theorems~\ref{Version-1},~\ref{Version-2} and~\ref{Version-3}. Note that the inequality~\eqref{eq:Q-MI} does not have unique solutions $\mathcal{P}$ and $\mathcal{K}$ and hence the resulting state estimator is just one possible choice. The first example illustrates Theorem~\ref{Version-1}.

\begin{Ex}\label{Ex1}
Consider the DAE
\begin{equation}
\label{Ex-Thm4_1_n+k<l+p}
\begin{aligned}
	\ddt \begin{bmatrix} 1 & 0 \\ 1&1\\ 0&0 \\ 0&1 \end{bmatrix} \begin{pmatrix}
 x_1(t) \\ x_2(t) \end{pmatrix}
	&=\begin{bmatrix} 0& -3 \\-2&0\\\ 1 &-2 \\ 0&0 \end{bmatrix} \begin{pmatrix} x_1(t) \\ x_2(t) \end{pmatrix}
	+ \begin{bmatrix} 0&2\\1&-1\\0&1\\1&0\end{bmatrix}\begin{pmatrix}\sin(x_1(t) - x_2(t))
	\\  x_2(t) + \exp( x_2(t)) \end{pmatrix}, \\
	y(t) &= \begin{bmatrix} 1 & -1 \end{bmatrix}\begin{pmatrix} x_1(t) \\ x_2(t)\end{pmatrix}.
	\end{aligned}
\end{equation}
Choosing $F=[1 , -1 ]$ the Lipschitz condition~\eqref{Lipschitz} is satisfied as
\begin{equation*}
	\begin{aligned}
\norm{f_L(x)-f_L(\hat{x})} &= \| \sin(x_1-x_2)-\sin(\hat{x}_1-\hat{x}_2)\| \\
&\leq \norm{(x_1-x_2) - (\hat{x}_1-\hat{x}_2)} = \norm{\begin{bmatrix} 1& - 1 \end{bmatrix} \begin{pmatrix} x_1-\hat{x}_1 \\ x_2-\hat{x}_2 \end{pmatrix}} \\
	\end{aligned}
\end{equation*}
for all $x,\hat{x} \in \mathcal{X} = \R^2$.
The monotonicity condition~\eqref{Monotonicity} is satisfied with $\Theta = I_{q_M} = 1$, $\mu =2$ and $J=[0,1]$ since for all $x,z \in \hat{\mathcal{X}}=J\mathcal{X} = \R$ we have
\begin{equation*}
	\begin{aligned}
(z-x)\left( f_M(z)-f_M(x) \right) &= (z-x)\left(z+\exp(z)-x-\exp(x)\right) \\
 &= (z-x)^2 + \underset{\ge 0}{\underbrace{(z-x)\left(\exp(z)-\exp(x)\right)}} \ge \frac{\mu}{2} (z-x)^2.
	\end{aligned}
\end{equation*}
To satisfy the conditions of Theorem~\ref{Version-1} we choose $k=2$. A straightforward computation yields that conditions~\eqref{eq:Q-MI} and~\eqref{eq:EP-MI} are satisfied with the following matrices $\mathcal{P} \in ~\mathbb{R}^{(4+1)\times (2+2)}$, $\cK\in\R^{(2+2)\times (2+2)}$, $L_1 \in \mathbb{R}^{4 \times 2}$ and $L_2 \in \mathbb{R}^{1 \times 2}$ on the subspace $\mathcal{V}_{\left[[\mathcal{E},0],[\mathcal{A},\mathcal{B}]\right]}^*$ with $\delta =1$:
\begin{center}
\begin{tabular}{l l}
$\mathcal{P}=\frac{1}{10}\begin{bmatrix} 2& -2 & 0 & 0 \\ 0& 0 &0&0 \\0&0&0&0 \\ -2&3&0&0\\0&0&0&0 \end{bmatrix}$, &
$\mathcal{K}=\frac{1}{5}\begin{bmatrix}  *& *& 4& 10\\ *&*& -4& -10\\ *&*& 0& 0 \\ *& *& 0& 0\end{bmatrix}$,  \\
\newline \\
$\begin{bmatrix} L_1 \\ L_2 \end{bmatrix}$ = $\begin{bmatrix} 4 & 10 \\ 1 & 9 \\ 9 & 4 \\ 0 & 0 \\ 2 & 1 \end{bmatrix}$, &
$\mathcal{V}_{\left[[\mathcal{E},0],[\mathcal{A},\mathcal{B}]\right]}^* = \im\begin{bmatrix} 1 & 0 & 0 \\ 0&1 & 0 \\5 & -4 & 0 \\-11&9&0\\0&0&1\\-2&2&0 \end{bmatrix}$.
\end{tabular}
\end{center}
Then Theorem~\ref{Version-1} implies that a state estimator for~\eqref{Ex-Thm4_1_n+k<l+p} is given by
\begin{equation}
\label{Ex-Thm4_1_n+k<l+p-Estimator}
\begin{aligned}
	\ddt \begin{bmatrix} 1 & 0 \\ 1&1\\ 0&0 \\ 0&1 \end{bmatrix} \begin{pmatrix}
 z_1(t) \\ z_2(t) \end{pmatrix}
	&=\begin{bmatrix} 0& -3 \\-2&0\\\ 1 &-2 \\ 0&0 \end{bmatrix} \begin{pmatrix} z_1(t) \\ z_2(t) \end{pmatrix}
	+ \begin{bmatrix} 0&2\\1&-1\\0&1\\1&0\end{bmatrix}\begin{pmatrix}\sin(z_1(t) - z_2(t))
	\\  z_2(t) + \exp(z_2(t)) \end{pmatrix} +
	\begin{bmatrix} 4&10\\1&9\\9&4\\0&0\end{bmatrix} \begin{pmatrix}
	d_1(t) \\ d_2(t)
\end{pmatrix}	 \\
	0 &= \begin{bmatrix} 1 & -1 \end{bmatrix}\begin{pmatrix} z_1(t) \\ z_2(t)\end{pmatrix} - y(t) + \begin{bmatrix} 2&1\end{bmatrix} \begin{pmatrix}
	d_1(t) \\ d_2(t) \end{pmatrix}
	\end{aligned}
\end{equation}
Note, that $L_2$ is not invertible and thus the state estimator cannot be reformulated as a Luenberger type observer. Further, $n+k<l+p$ and therefore the pencil $s\cE-\cA$ is not square and hence in particular not regular; thus \eqref{Conditions-Version-2}~b) cannot be satisfied. In addition, for $F$ and $J$ in the present example, condition~\eqref{Conditions-Version-2}~e) does not hold (and is independent of~$k$), thus Theorem~\ref{Version-2} is not applicable here. A closer investigation reveals that for $k~=~l+p-n$ inequality~\eqref{eq:EP-MI} cannot be satisfied. We like to emphasize that $\cQ~<_{\mathcal{V}_{\left[[\mathcal{E},0],[\mathcal{A},\mathcal{B}]\right]}^*}~0$ but $\cQ~<~0$ does not hold on $\R^{n+k+q_L+q_M}~=~\R^6$.
\end{Ex}

The next example illustrates Theorem~\ref{Version-2}.

\begin{Ex}
We consider the DAE
\begin{equation}
\label{Ex-Thm4_3_fL_und_fM} 
	\begin{aligned}
	\ddt \begin{bmatrix} 1 & -1\\ 0&0 \end{bmatrix} \begin{pmatrix}
 x_1(t) \\ x_2(t) \end{pmatrix}
	&=\begin{bmatrix}  -1&0 \\0 &1 \end{bmatrix} \begin{pmatrix} x_1(t) \\ x_2(t) \end{pmatrix}
	+ \begin{bmatrix} 2&-1\\-1&1\end{bmatrix}\begin{pmatrix}\sin\big(x_1(t)+x_2(t)\big) \\ x_1(t)+x_2(t)+\exp(x_1(t)+x_2(t))\end{pmatrix}, \\
	y(t) &= \begin{bmatrix}  1 & 1 \end{bmatrix}\begin{pmatrix} x_1(t) \\ x_2(t) \end{pmatrix}.
	\end{aligned}
\end{equation}
Similar to Example~\ref{Ex1} it can be shown that the monotonicity condition~\eqref{Monotonicity} is satisfied for $f_M(x) = x + \exp(x)$ with $J=[1,1]$, $\Theta = 1$ and $\mu=2$; the Lipschitz condition~\eqref{Lipschitz} is satisfied for $f_L(x_1,x_2) = \sin(x_1+x_2)$ with $F~=~[1,1]$.

Choosing $k=1$ a straightforward computation yields that conditions~\eqref{eq:Q-MI} and~\eqref{Conditions-Version-2}~a) are satisfied with $\delta=1.5$, the following matrices $\mathcal{P},\mathcal{K}~\in~\mathbb{R}^{(2+1)\times (2+1)}$, $L_1 \in \mathbb{R}^{2 \times 1}$ and $L_2 \in \mathbb{R}^{1 \times 1} = \R$ and subspaces $\mathcal{V}_{\left[[\mathcal{E},0],[\mathcal{A},\mathcal{B}]\right]}^*$,$\mathcal{V}_{\left[\mathcal{E},\mathcal{A}\right]}^*$ and $\mathcal{W}_{\left[\mathcal{E},\mathcal{A}\right]}^*$:
\begin{center}
\begin{tabular}{l l l}
$\mathcal{P}=\frac{1}{10}\begin{bmatrix}  1 & -1 & 0 \\  1 & 17 & 0 \\ 0 & 0 & 17 \end{bmatrix}$, &
$\mathcal{K}=\frac{1}{10}\begin{bmatrix}  *& *&  8\\ *& *& -134\\ *& *& 17 \end{bmatrix}$,  &
$\begin{bmatrix} L_1 \\ L_2 \end{bmatrix}$ = $\begin{bmatrix} 15 \\ -7 \\ 1 \end{bmatrix}$, \\
\newline \\
$\mathcal{V}_{\left[[\mathcal{E},0],[\mathcal{A},\mathcal{B}]\right]}^* = \im\begin{bmatrix} 1 & 0 & 0 \\0& 1 & 0 \\-1 & -1 & 0 \\0&0&1  \\ -7 & -8 & 1 \end{bmatrix}$, &
$\mathcal{V}_{\left[\mathcal{E},\mathcal{A}\right]}^* =
\im \begin{bmatrix} 8 \\ -7 \\ -1  \end{bmatrix}$,
&
$\mathcal{W}_{\left[\mathcal{E},\mathcal{A}\right]}^* =
\im \begin{bmatrix} 1 & 0 \\1&0\\0&1 \end{bmatrix}$.
\end{tabular}
\end{center}
Conditions~\eqref{Conditions-Version-2}~b)~--~e) are satisfied as follows:
\begin{enumerate}
\item[b)] $\det(s\cE-\cA)\neq 0$ and, using \cite[Prop.~2.2.9]{Berg14a}, the index of $s\cE-\cA$ is $\nu=k^*=1$, where $k^*$ is from Def.~\ref{Wong-sequences};
\item[c)] this holds since $G_L=[1/15,1/15]^\top$ and thus $\|F G_L\| <1$;
\item[d)] $J G_M$ is invertible since $G_M=-[1/15,1/15]^\top$ and  $\lambda_{max}(\Gamma)=-15<2=\mu$;
\item[e)] this condition is satisfied with e.g. $\alpha=1$ since $F=J$, and
\[
\frac{\alpha \|JG_L\|}{1- \|F G_L\|} \left(\sqrt{\frac{\max\{0,\lambda_{\rm max}(S)\}}{\mu - \lambda_{\rm max}(\Gamma)}} + \|(\Gamma-\mu I_{q_M})^{-1}(\tilde\Theta^\top - \mu I_{q_M})\|\right) = \frac{19}{221} < 1.
\]
\end{enumerate}
Then Theorem~\ref{Version-2} implies that a state estimator for system~\eqref{Ex-Thm4_3_fL_und_fM} is given by
\begin{equation} \label{Ex-Thm4_3_fL_und_fM-Estimator}
	\begin{aligned}
	\ddt \begin{bmatrix} 1 & -1\\ 0&0 \end{bmatrix} \begin{pmatrix}
 z_1(t) \\ z_2(t) \end{pmatrix}
	&=\begin{bmatrix}  -1&0 \\0 &1 \end{bmatrix} \begin{pmatrix} z_1(t) \\ z_2(t) \end{pmatrix}
	+ \begin{bmatrix} 2&-1\\-1&1\end{bmatrix}\begin{pmatrix}\sin\big(z_1(t)+z_2(t)\big) \\ z_1(t)+z_2(t)+\exp(z_1(t)+z_2(t))\end{pmatrix}\\
&\quad + \begin{bmatrix} 15 \\ -7 \end{bmatrix} d(t),	 \\
	0 &= \begin{bmatrix}  1 & 1 \end{bmatrix}\begin{pmatrix} z_1(t) \\ z_2(t) \end{pmatrix} - y(t) + d(t).
	\end{aligned}
\end{equation}
Straightforward calculations show that conditions \eqref{Conditions-Version-2}~a)~--~e) are satisfied, but condition~\eqref{eq:EP-MI} is violated; thus, Theorem~\ref{Version-1} is not applicable for $k=l+p-n=1$. The matrix~$L_2$ is invertible and hence the state estimator~\eqref{Ex-Thm4_3_fL_und_fM-Estimator} can be transformed as a standard Luenberger type observer. We  emphasize that $\mathcal{Q}<0$ does not hold on $\mathbb{R}^5$, i.e., the matrix inequality~\eqref{eq:Q-MI} on the subspace $\mathcal{V}_{\left[[\mathcal{E},0],[\mathcal{A},\mathcal{B}]\right]}^* \subseteq \mathbb{R}^5$ is a weaker condition.
\end{Ex}

The last example is an electric circuit where monotone nonlinearities occur, which is taken from~\cite{Riaz03}.

\begin{Ex}
Consider the electric circuit depicted in Fig.~\ref{Fig:RLC-circuit}, where a DC source with voltage~$\rho$ is connected in series to a linear resistor with resistance $R$, a linear inductor with inductance~$L$ and a nonlinear capacitor with the nonlinear characteristic
\begin{equation}\label{eq:cap-char}
    {q=g(v)=(v-v_0)^3-(v-v_0)+q_0},
\end{equation}
where $q$ is the electric charge and~$v$ is the voltage over the capacitor.

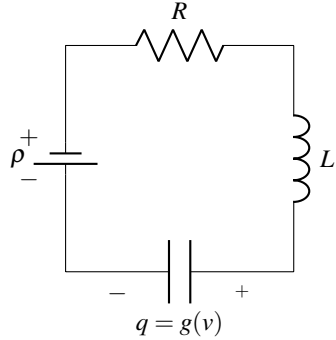
\begin{figure}[h!]
\begin{center}
\begin{circuitikz}[american, scale = 1.5]
  \draw (0,0)
  to[battery1, v^<=$\rho$] (0,2) 
  to[R=$R$] (2,2) 
  to[L=$L$] (2,0) 
  to[C, v^>=\text{$q=g(v)$}] (0,0); 
\end{circuitikz}
\end{center}
\caption{Nonlinear RLC circuit} \label{Fig:RLC-circuit}
\end{figure}

Using the magnetic flux~$\phi$ in the inductor, the circuit admits the charge-flux description
\begin{equation}\label{eq:charge-flux}
	\begin{aligned}
\dot{q}(t) &= \frac{1}{L} \phi(t), \\
\dot{\phi}(t) &= -\frac{R}{L}\phi(t) - v(t) + \rho(t). \\
	\end{aligned}
\end{equation}
We scale the variables $q=$C~$\tilde{q}$, $\phi=$Vs~$\tilde{\phi}$, $v=$V~$\tilde{v}$ (where s, V and C denote the SI units for seconds, Volt and Coulomb, resp.) in order to make them dimensionless. For simulation purposes we set $\rho~=\rho_0~=~2$~V (i.e. $\rho$ trivially satisfies condition \eqref{Lipschitz}), $R=1\,\Omega$ and $L=0.5$~H, $\tilde{q}_0=\tilde{v}_0=1$. Then with $(x_1 , x_2 , x_3)^\top = \left(\tilde{q}-\tilde{q}_0,\tilde{\phi},\tilde{v}-\tilde{v}_0\right)^\top$ the circuit equations~\eqref{eq:cap-char},~\eqref{eq:charge-flux} can be written as the DAE
\begin{equation}
\label{RLC-system} 
	\begin{aligned}
	\ddt \begin{bmatrix} 1 & 0& 0\\0& 1&0 \\ 0&0&0 \end{bmatrix} \begin{pmatrix}
 x_1(t) \\ x_2(t) \\x_3(t) \end{pmatrix}
	&=\begin{bmatrix} 0& 2&0 \\0&-2 &-1 \\ -1 &0& -1 \end{bmatrix} \begin{pmatrix} x_1(t) \\ x_2(t) \\x_3(t) \end{pmatrix}
	+ \begin{bmatrix}0&0\\1&0\\0&1\end{bmatrix}\begin{pmatrix}  1 \\ x_3(t)^3\end{pmatrix} \\
	y(t) &= \begin{bmatrix}1&0&-1 \end{bmatrix}\begin{pmatrix} x_1(t) \\ x_2(t) \\x_3(t) \end{pmatrix},
	\end{aligned}
\end{equation}
where the output is taken as the difference $q(t)-v(t)$. Now, similar to the previous examples, a straightforward computation shows that Theorem~\ref{Version-2} is applicable and yields parameters for a state estimator for~\eqref{RLC-system}, which has the form
\begin{equation}
\label{RLC-estimator}
	\begin{aligned}
	\ddt \begin{bmatrix} 1 & 0& 0\\0& 1&0 \\ 0&0&0 \end{bmatrix} \begin{pmatrix}
 z_1(t) \\ z_2(t) \\z_3(t) \end{pmatrix}
	&=\begin{bmatrix} 0& 2&0 \\0&-2 &-1 \\ -1 &0& -1 \end{bmatrix} \begin{pmatrix} z_1(t) \\ z_2(t) \\z_3(t) \end{pmatrix}
	+ \begin{bmatrix}0&0\\1&0\\0&1\end{bmatrix}\begin{pmatrix}  1 \\ z_3^3(t)\end{pmatrix}
	+ \begin{bmatrix} -1 \\ 5 \\ 5	\end{bmatrix} d(t) \\
	0 &= \begin{bmatrix}1&0&-1 \end{bmatrix}\begin{pmatrix} z_1(t) \\ z_2(t) \\z_3(t) \end{pmatrix} - y(t) + 4 d(t).
	\end{aligned}
\end{equation}
Note that since $L_2=4$ is invertible, the given state estimator can be reformulated as an observer of Luenberger type with gain matrix $L=L_1L^{-1}_2$. As before we emphasize that $\mathcal{Q}<0$ is not satisfied on $\mathbb{R}^6$.\\
Note that this example also satisfies the assumptions of Theorem~\ref{Version-1} with $k=0$, i.e., the system copy itself serves as a state estimator (no innovation terms~$d$ are present).
\end{Ex}

\section{Comparison with the literature} \label{Comparison with other results}

We compare the results found in~\cite{Berg19,GuptToma18,LuHo06} to the results in the present paper. In~\cite[Thm.~2.1]{LuHo06} a way to construct an asymptotic observer of Luenberger type is presented. In the work~\cite{GuptToma18} a reduced-order observer design for non-square nonlinear DAEs is presented. An essential difference to Theorems~\ref{Version-1},~\ref{Version-2} and~\ref{Version-3} is the space on which the LMIs are considered. While in~\cite{GuptToma18,LuHo06} the LMI has to hold on~$\mathbb{R}^n$, the inequalities stated in the present paper as well as the inequalities stated in~\cite[Thm.~III.1]{Berg19} only have to be satisfied on a certain subspace where the solutions evolve in. While solving the LMIs stated in~\cite{GuptToma18,LuHo06} on the entire space~$\mathbb{R}^n$ is a much stronger condition, an advantage of this is that it can be solved numerically with little effort.

The second difference is that in~\cite{Berg19,LuHo06} the nonlinearity has to satisfy a Lipschitz condition of the form~\eqref{Lipschitz}, and the nonlinearity $f\in\cC^1(\mathbb{R}^r\to \mathbb{R}^r)$ in~\cite{GuptToma18} has to satisfy the generalized monotonicity condition $f'(s) + f'(s)^\top \geq \mu I_r$ for all $s\in\R^r$, which is less general than condition~\eqref{Monotonicity}, cf.~\cite{LiuDuan12}. In the present paper we allow the function $f = \binom{f_L}{f_M}$ to be a combination of a function~$f_L$ satisfying~\eqref{Lipschitz} and a function~$f_M$ satisfying~\eqref{Monotonicity}. Therefore the presented theorems cover a larger class of systems.

Furthermore,~\cite{LuHo06} consider square systems only, while in Theorems~\ref{Version-1},~\ref{Version-2} and~\ref{Version-3} we allow for any rectangular systems with $n\neq l$. Therefore, the observer design presented in the present paper is a considerable generalization of the work~\cite{LuHo06}.

Compared to~\cite[Thm. III.1]{Berg19}, we may observe that in this work the invertibility of a matrix consisting of system parameters and the gain matrices~$L_2$ and~$L_3$ is required. This condition as well as the rank condition is comparable to the regularity condition~\eqref{Conditions-Version-2} b) in the present paper. However, in the present paper we do not state explicit conditions on the gains, which are unknown beforehand and constructed out of the solution of~\eqref{eq:Q-MI}. Hence only the solution matrices~$\mathcal{P}$ and~$\mathcal{K}$ are required to meet certain conditions.

\section{Computational aspects} \label{Sec:CompAsp}

The sufficient conditions for the existence of a state estimator/asymptotic observer stated in Theorems~\ref{Version-1},~\ref{Version-2} and~\ref{Version-3} need to be satisfied at the same time, in each of them. Hence it might be difficult to develop a computational procedure for the construction of a state estimator based on these results, in particular since the subspaces $\mathcal{V}_{\left[[\mathcal{E},0][\mathcal{A},\mathcal{B}] \right]}^*$, $\mathcal{V}_{\left[\mathcal{E},\mathcal{A} \right]}^*$  and $\mathcal{W}_{\left[\mathcal{E},\mathcal{A} \right]}^*$ depend on the solutions~$\cP$ and~$\cK$ of~\eqref{eq:Q-MI}. The state estimators for the examples given in Section~\ref{examples} are constructed using ``trial and error'' rather than a systematic numerical procedure. The development of such a numerical method will be the topic of future research.

Nevertheless, the theorems are helpful tools in examining if an alleged observer candidate is a state estimator for a given system. To this end, we may set $\mathcal{K}H = \mathcal{P}^\top \hat{L}$ with given $\hat{L}$. Then~$\cA = \hat A + \hat L$ and the subspace to which~\eqref{eq:Q-MI} is restricted is independent of its solutions and hence~\eqref{eq:Q-MI} can be rewritten as a LMI on the space~$\R^{n^*}$, where $n^* = \dim \cV^*_{[[\cE,0],[\cA,\cB]]}$. This LMI can be solved numerically stable by standard MATLAB toolboxes like YALMIP~\cite{Lofb04} and PENLAB~\cite{FialKocv13}. For other algorithmic approaches see e.g.\ the tutorial paper~\cite{VanABraa00}.

\bibliographystyle{spmpsci}

\printindex

\end{document}